\newcommand{\N}{\mathbb{N}}
\newcommand{\R}{\mathbb {R}}
\newcommand{\dd}{D_{n,q}}
\newcommand{\ddd}{\,d_{n,q}}
\newcommand{\fft}{qt^n}
\newcommand{\ffkt}{q^{[k]_n }t^{n^k}}
\newcommand{\ffka}{q^{[k]_n }a^{n^k}}
\newcommand{\ffks}{q^{[k]_n }s^{n^k}}
\newcommand{\ffkb}{q^{[k]_n }b^{n^k}}
\newcommand{\ffla}{q^{[l]_n }a^{n^l}}
\newcommand{\fflb}{q^{[l]_n }b^{n^l}}
\newcommand{\w}{\theta}
\newcommand{\h}{n,q}
\newcommand{\To}{{\longrightarrow}}
\newtheorem{thm}{Theorem}
\newtheorem{lem}[thm]{Lemma}
\theoremstyle{definition}
\newtheorem{rem}[thm]{Remark}
\newtheorem{exm}[thm]{Example}
\newtheorem{defn}[thm]{Definition}
\begin{document}

\thispagestyle{empty}
\setcounter{page}{1}

\noindent
{\footnotesize {\rm Accepted for publication in
Dynamics of Continuous, Discrete and Impulsive Systems,\\[-1.00mm] 
Series B (DCDIS-B), Special issue on Optimal Control and Its Applications,\\[-1.00mm] 
dedicated to the memory of Arie Leizarowitz.\\[-1.00mm] 
Submitted 04-Jan-2011; revised 30-Jun-2011; accepted 01-Jul-2011.}}


\bigskip

\begin{center}
{\large\bf \uppercase{The power quantum calculus\\
and variational problems}}

\vskip.20in

Khaled A. Aldwoah$^{1}$,
Agnieszka B. Malinowska$^{2}$,
\ and\ Delfim F. M. Torres$^{3}$ \\[2mm]
{\footnotesize
$^{1}$Department of Mathematics, College of Science\\
Jazan University, Jazan, Saudi Arabia\\[5pt]
$^{2}$Faculty of Computer Science,
Bia{\l}ystok  University of Technology\\
15-351 Bia{\l}ystok, Poland\\[5pt]
$^{3}$Department of Mathematics,
University of Aveiro\\
3810-193 Aveiro, Portugal\\
Corresponding author email: delfim@ua.pt}
\end{center}


{\footnotesize
\noindent
{\bf Abstract.}
We introduce the power difference calculus based on the operator
$D_{n,q} f(t) = \frac{f(qt^n)-f(t)}{qt^n -t}$,
where $n$ is an odd positive integer and $0<q<1$.
Properties of the new operator and its inverse
--- the $d_{n,q}$ integral --- are proved.
As an application, we consider power quantum Lagrangian systems
and corresponding $n,q$-Euler--Lagrange equations.\\[3pt]
{\bf Keywords.}
Quantum variational problems;
$n,q$-power difference operator;
generalized N\"{o}rlund sum;
generalized Jackson integral;
$n,q$-difference equations.\\[3pt]
{\small\bf AMS (MOS) subject classification:} 39A13; 39A70; 49K05; 49S05.}

\vskip.2in


\section{Introduction}

Quantum derivatives and integrals play a leading role in the understanding of complex
physical systems. In 1992 Nottale introduced the theory of scale-relativity
without the hypothesis of space-time differentiability
\cite{Nottale:1992,Nottale:1999}. A rigorous mathematical
foundation to Nottale's scale-relativity theory is nowadays given
by means of a quantum calculus \cite{Ric:Del,MyID:162,Jacky:Gastao:Delfim,Kac}.
Roughly speaking, a quantum calculus substitute the classical derivative
by a difference operator, which allows to deal with sets of non differentiable curves.
For the motivation to study a non-differentiable quantum
calculus we refer the reader to \cite{Ric:Del,Jacky:Gastao:Delfim,Kac,Nottale:1992}.

Quantum calculus has several different dialects \cite{withMiguel01,Ernst,Kac}.
The most common tongue of quantum calculus
is based on the $q$-operator ($q$ stands for quantum),
which is based on the Jackson $q$-difference operator
and the associated Jackson $q$-integral \cite{Jack1,Jack2,Kac}.
The Jackson $q$-difference operator is defined by
\begin{equation*}
D_{q} f(t) = \frac{f(qt)-f(t)}{t(q-1)},\quad  \; t\neq 0 \, ,
\end{equation*}
where $q$ is a fixed number, normally taken to lie in $(0,1)$. Here
$f$ is supposed to be defined on a $q$-geometric set $A$, \textrm{i.e.},
$ A$ is a subset of $\R$ (or $\mathbb{C}$) for which $qt \in A$ whenever $t \in
A$. The derivative at zero is normally defined to be $f^\prime(0)$,
provided that $f^\prime(0) $ exists \cite{AAIM1,AAR,Ch1,Ch2,MIS,Jack1}.
Jackson also introduced the $q$-integral
\begin{equation}
\label{Jackson integral at 0}
\int_{0}^{a} f(t) d_{q} t
= a(1-q) \sum\limits_{k=0}^{\infty} q^{k} f (aq^{k})
\end{equation}
provided that the series converges \cite{Alsalam,Jack2,Kac}.
He then defined
\begin{equation}
\label{Jackson integral a b}
\int_{a}^{b} f(t) d_{q} t = \int_{0}^{b} f(t) d_{q} t -
\int_{0}^{a} f(t) d_{q} t.
\end{equation}
There is no unique canonical choice for the $q$-integral from $0$ to
$\infty$. Following Jackson we will put
\begin{equation*}
\int_{0}^{\infty} f(t)d_q t= (1-q)\sum\limits_{k= -\infty}^{\infty} q^k f(q^k),
\end{equation*}
provided the sum converges absolutely \cite{GasperRahman,Koornwinder}.
The other natural choices are then expressed by
\begin{equation*}
\int_{0}^{s\cdot\infty} f(t) d_q t= s(1-q) \sum_{k=-\infty}^{\infty} q^k
f(sq^k),\; \; s>0
\end{equation*}
(see \cite{Koornwinder}). The bilateral $q$-integral is defined by
\begin{equation*}
\int_{-s\cdot\infty}^{s\cdot\infty} f(t) d_q t= s(1-q)
\sum_{k=-\infty}^{\infty} \left[q^k f(sq^k)+q^k f(-sq^k)\right],\; \;
s>0.
\end{equation*}

The main goal of this work is to generalize the important
$q$-calculus in order to include also
the quantum calculus that results from
the $n$-power difference operator
\begin{equation}
\label{n power operator}
D_{n} f(t) =
\begin{cases}
\frac{f(t^n)-f(t)}{t^n -t} &\mbox{ if } t\in\R\setminus \{-1,0,1\},\\
f^{\prime}(t) &\mbox{ if } t\in \{-1,0,1\},
\end{cases}
\end{equation}
where $n$ is a fixed odd positive integer \cite{aldwoah}.
For that we develop a calculus
based on the new and more general proposed operator $\dd$
(see Definition~\ref{def:qp:diff}).
The class of quantum systems thus obtained has two parameters
and is wider than the standard class of quantum dynamical systems
studied in the literature.
We claim that the $\h$-calculus here introduced
offers a better mathematical modeling technique
to deal with quantum physical systems
of time-varying graininess.
We trust that our $\h$ quantum calculus will become a useful tool
to investigate more about non-conservative dynamical systems in physics
\cite{Bartos,MMAS:El:Del,JMP:El:Del,JMAA:Gastao}.

The paper is organized as follows. Our results are given in
Section~\ref{sec:MR} and Section~\ref{sec:appl}:
in \S\ref{subsec:diff} we introduce the notion
of power quantum differentiation and prove its main properties;
in \S\ref{subsec:int} we develop the notion of power quantum
integration as the inverse operation of power quantum
differentiation; and in \S\ref{ssec:EL} we obtain the
Euler--Lagrange equation for functionals defined by $\h$-derivatives
and integrals, generalizing the Euler--Lagrange equations presented
in \cite{Bang04,Bang05}. We also provide (see \S\ref{Leitmann})
an additional tool for solving power quantum variational problems, by showing that
the direct method introduced by Leitmann in the sixties of the XX
century \cite{Leitmann67}, and recently extended to different contexts
\cite{MyID:154,Leitmann01,MR2035262,abmalina:delfim,torres}, remains effective here.


\section{The power quantum calculus}
\label{sec:MR}

For a fixed $0<q<1$, $k\in \N_0:=\N\cup\{0\}$, and a fixed
odd positive integer $n$, let us denote
\[
\w:=
\left\{\begin{array}{lcl}
\infty &\mbox{if}&n=1, \\
q^{\frac{1}{1-n}} &\mbox{if}&n\in 2\mathbb N+1,
\end{array}\right.
\quad
S:=\left\{\begin{array}{lcl}
\{0\} &\mbox{if}&n=1, \\
\{-\w,0,\w\} &\mbox{if}&n\in 2\mathbb N+1,
\end{array}\right.\]
\[\mbox{and}\quad[k]_n:=\left\{\begin{array}{lcl}
 \sum_{i=0}^{k-1}n^i&\mbox{if}&k\in \N, \\
0 &\mbox{if}&k=0.
\end{array}\right.
\]

\begin{lem}
\label{Hn To w0}
Let $h: \R \To \R$ be the function defined by $h(t):=qt^n$. Then,
$h$ is one-to-one, onto, and
$\displaystyle h^{-1} (t) =\sqrt[n]{\frac{t}{q}}$.
Moreover,
$$
h^k (t):= \underbrace{h\circ h\circ  \cdots \circ
h}_{k-\mbox{times}}(t)= q^{[k]_n} t^{n^k}
$$
and
$$
h^{-k}
(t):= \underbrace{h^{-1}\circ h^{-1}\circ  \cdots \circ
h^{-1}}_{k-\mbox{times}}(t)= q^{-n^{-k_{[k]_n}}}t^{n^{-k}}
$$
with
\[
\lim_{k\To \infty} h^k (t)
= \left\{\begin{array}{ccc}
\infty &\mbox{if}& t>\w  \\
0  &\mbox{if} & -\w<t<\w\\
-\infty &\mbox{if} & t<-\w\\
t &\mbox{if}&t\in S
\end{array}\right.
\]
and
\[
\lim_{k\To \infty} h^{-k}(t)
= \left\{\begin{array}{ccl}
\w &\mbox{if}&\;\; 0<t  \\
-\w &\mbox{if}& \;\;t<0  \\
t  &\mbox{if} & \;\;t\in S\, .
 \end{array}\right.
\]
\end{lem}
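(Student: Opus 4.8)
The plan is to dispatch the purely algebraic claims first and then reduce the two limit statements to the behaviour of powers of a single real number. Since $n$ is odd, $\phi(t):=t^{n}$ has derivative $nt^{n-1}\ge 0$, vanishing only at $t=0$, so $\phi$ is continuous and strictly increasing, and since $\phi(t)\To\pm\infty$ as $t\To\pm\infty$ it is onto $\R$; composing with the dilation $t\mapsto qt$ (a bijection because $q>0$) shows $h$ is a bijection of $\R$, and solving $qt^{n}=s$ gives $h^{-1}(s)=\sqrt[n]{s/q}$. The two iteration formulas I would prove by induction on $k$: for $h^{k}$ the base $k=0$ uses $[0]_n=0$, and $h^{k+1}(t)=q\bigl(q^{[k]_n}t^{n^{k}}\bigr)^{n}=q^{\,1+n[k]_n}t^{n^{k+1}}$ with $1+n[k]_n=[k+1]_n$ immediate from the definition of $[k]_n$; for $h^{-k}$, starting from $h^{-1}(t)=q^{-1/n}t^{1/n}$, the inductive step produces the $q$-exponent $-\bigl(\tfrac1n+n^{-(k+1)}[k]_n\bigr)$, and $\tfrac1n+n^{-(k+1)}[k]_n=n^{-(k+1)}\bigl(n^{k}+[k]_n\bigr)=n^{-(k+1)}[k+1]_n$ by the other obvious recursion $[k+1]_n=[k]_n+n^{k}$. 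Throughout, every power occurring is defined for all $t\in\R$ precisely because $n$ is odd, so $n^{\pm k}$-th roots of negative reals are meant in the real sense, while $q>0$ makes the $q$-powers unambiguous.

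For the limit of $h^{k}$ I would treat $n=1$ separately: then $h^{k}(t)=q^{k}t\To 0$ for every $t$, which matches the statement since $\theta=\infty$ and $S=\{0\}$. For odd $n\ge 3$, use $[k]_n=\frac{n^{k}-1}{n-1}$ to factor $h^{k}(t)=q^{-1/(n-1)}\bigl(q^{1/(n-1)}t\bigr)^{n^{k}}=\theta\,(t/\theta)^{n^{k}}$, using $\theta=q^{1/(1-n)}=q^{-1/(n-1)}>0$. Writing $r=t/\theta$ and letting $k\To\infty$ through the odd integers $n^{k}$, one has $r^{n^{k}}\To 0$ if $|r|<1$, $r^{n^{k}}\To+\infty$ if $r>1$, $r^{n^{k}}\To-\infty$ if $r<-1$ (oddness of $n^{k}$ fixing the sign), and $r^{n^{k}}\equiv r$ if $r\in\{-1,0,1\}$, that is, if $t\in S$; multiplying by $\theta>0$ gives exactly the four stated cases.

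The limit of $h^{-k}$ is handled the same way. For $n=1$, $h^{-k}(t)=q^{-k}t\To+\infty=\theta$ if $t>0$, $\To-\infty=-\theta$ if $t<0$, and $=0=t$ if $t=0$. For odd $n\ge 3$, the iterate formula gives $q$-exponent $-n^{-k}[k]_n=-\tfrac{1-n^{-k}}{n-1}\To-\tfrac1{n-1}$, hence $q^{-n^{-k}[k]_n}\To q^{-1/(n-1)}=\theta$, while $t^{n^{-k}}\To 1$ for $t>0$ and $t^{n^{-k}}\To-1$ for $t<0$ (again by oddness of $n^{k}$), and $t^{n^{-k}}=0$ for $t=0$; so $h^{-k}(t)\To\pm\theta$ according to the sign of $t$. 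Finally, $S$ is exactly the fixed-point set of $h$ — indeed $q\theta^{n}=\theta$ because $\theta^{n-1}=1/q$, $h$ is odd so $h(-\theta)=-\theta$, and $h(0)=0$ — whence $h^{-k}(t)=t$ for all $k$ when $t\in S$ (which also accounts for the borderline case of the $h^{k}$ limit).

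The work is almost entirely routine; the only place that needs attention is the exponent bookkeeping in the $h^{-k}$ induction, where one must recognize $\tfrac1n+n^{-(k+1)}[k]_n=n^{-(k+1)}[k+1]_n$, together with the tacit but essential use of the oddness of $n$ to interpret $t^{n^{\pm k}}$ for negative $t$. The one genuine idea is the factorization $q^{[k]_n}t^{n^{k}}=\theta(t/\theta)^{n^{k}}$, which converts an a priori indeterminate product $0\cdot\infty$ into a transparent power; alternatively, one may invoke the softer observation that $h$ is continuous, strictly increasing and odd, so each orbit is monotone and hence converges in $[-\infty,\infty]$ to a fixed point of $h$ or to $\pm\infty$, which gives the same conclusions.
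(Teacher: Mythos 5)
Your proof is correct and complete. Note that the paper states this lemma without giving any proof at all, so there is nothing to compare against; your argument supplies the missing details. All the steps check out: the bijectivity of $h$ from the oddness of $n$, the two inductions with the recursions $[k+1]_n=1+n[k]_n$ and $[k+1]_n=[k]_n+n^k$, and the limit computations. The one genuinely useful observation is the factorization $q^{[k]_n}t^{n^k}=\theta\,(t/\theta)^{n^k}$ (valid for $n\ge 3$ via $[k]_n=\frac{n^k-1}{n-1}$), which reduces the first limit to the elementary behaviour of $r^{n^k}$ for a fixed real $r$ and makes the role of the threshold $\theta$ and of the set $S$ (the fixed points of $h$) transparent; your alternative ``soft'' argument via monotone orbits would also work. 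Two small remarks: the exponent in the paper's formula for $h^{-k}$ is garbled ($q^{-n^{-k_{[k]_n}}}$ should read $q^{-n^{-k}[k]_n}$), and your induction implicitly corrects this; and you correctly treat $n=1$ separately, where $\theta=\infty$ makes the case distinctions degenerate. No gaps.
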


In Figure~\ref{figure 1} we illustrate the behaviour of $h^k(t)$
of Lemma~\ref{Hn To w0} in the case $-\w<t<\w$.


\begin{figure}
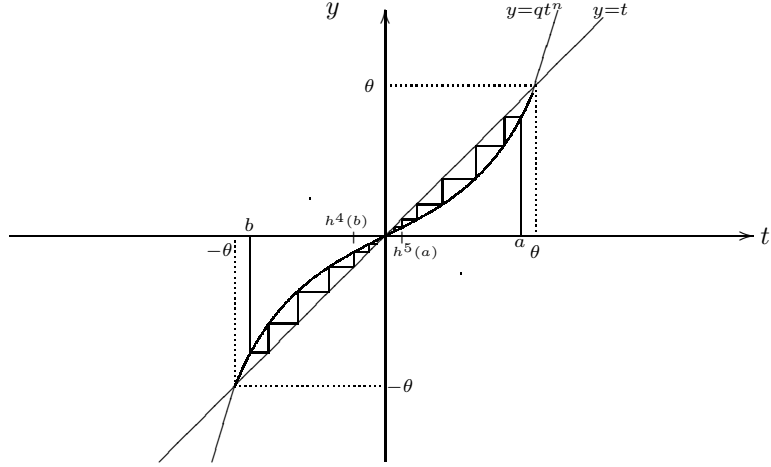

\[
\xy
{\ar@{->} (50,0)*{}; (50,60)*{}};
{\ar@{-} (43,60)*{}; (43,60)*{y}}; {\ar@{->} (0,30)*{};
(100,30)*{\;t}}; {\ar@{-} (20,0)*{}; (80,60)*{_{y=t}}}; {\ar@{-}
(30,10)*{}; (27,0)*{}}; {\ar@{.} (30,10)*{}; (30,30)*{}}; {\ar@{.}
(70,50)*{}; (70,30)*{}}; {\ar@{.} (30,10)*{}; (50,10)*{}}; {\ar@{.}
(50,50)*{}; (70,50)*{}}; {\ar@{.} (48,50)*{}; (48,50)*{_{\theta}}};
{\ar@{.} (28,28)*{}; (28,28)*{_{-\theta}}}; {\ar@{.} (70,28)*{};
(70,28)*{_{\theta}}}; {\ar@{.} (52,10)*{}; (52,10)*{_{-\theta}}};
{\ar@{-} (69.54,48.9)*{}; (73,60)*{}}; {\ar@{-} (70,60)*{};
(70,60)*{_{y=qt^n}}};
{\ar@{-} (68,29)*{}; (68,29)*{_a}}; {\ar@{-} (68,30)*{};
(68,45.8)*{}}; {\ar@{-} (68,45.8)*{}; (65.8,45.8)*{}}; {\ar@{-}
(65.8,45.8)*{}; (65.8,42)*{}}; {\ar@{-} (65.8,42)*{}; (62,42)*{}};
{\ar@{-} (62,42)*{}; (62,37.6)*{}}; {\ar@{-} (62,37.6)*{};
(57.6,37.6)*{}}; {\ar@{-} (57.6,37.6)*{}; (57.6,34.2)*{}}; {\ar@{-}
(57.6,34.2)*{}; (54.2,34.2)*{}}; {\ar@{-} (54.2,34.2)*{};
(54.2,32.2)*{}}; {\ar@{-} (54.2,32.2)*{}; (52.2,32.2)*{}}; {\ar@{-}
(54.2,30)*{}; (52.2,30)*{\shortmid}}; {\ar@{-} (54.2,28)*{};
(54.2,28)*{_{_{h^5 (a)}}}}; {\ar@{-} (45,32)*{}; (45,32)*{_{_{h^4
(b)}}}}; {\ar@{-} (45.8,30)*{}; (45.8,30)*{\shortmid}}; {\ar@{-}
(52.2,32.2)*{}; (52.2,31.2)*{}}; {\ar@{-} (52.2,31.2)*{};
(51.2,31.2)*{}}; {\ar@{-} (51.2,31.2)*{}; (51.2,30.7)*{}};
{\ar@{-} (32,31.5)*{}; (32,31.5)*{_b}}; {\ar@{-} (32,30)*{};
(32,14.5)*{}}; {\ar@{-} (32,14.5)*{}; (34.5,14.5)*{}}; {\ar@{-}
(34.45,14.5)*{}; (34.45,18.35)*{}}; {\ar@{-} (34.45,18.35)*{};
(38.35,18.35)*{}}; {\ar@{-} (38.35,18.35)*{}; (38.35,22.5)*{}};
{\ar@{-} (38.35,22.5)*{}; (42.5,22.5)*{}}; {\ar@{-} (42.5,22.5)*{};
(42.5,25.9)*{}}; {\ar@{-} (42.5,25.9)*{}; (45.8,25.9)*{}}; {\ar@{-}
(45.8,25.9)*{}; (45.8,27.87)*{}}; {\ar@{-} (45.8,27.87)*{};
(47.87,27.87)*{}}; {\ar@{-} (47.87,27.87)*{}; (47.87,28.87)*{}};
{\ar@{-} (47.87,28.87)*{}; (48.87,28.87)*{}};
{\ar@{-} (57.6,34.2)*{}; (54.2,34.2)*{}}; {\ar@{-} (54.2,34.2)*{};
(54.2,32.2)*{}}; {\ar@{-} (54.2,32.2)*{}; (52.2,32.2)*{}}; {\ar@{-}
(52.2,32.2)*{}; (52.2,31.2)*{}}; {\ar@{-} (52.2,31.2)*{};
(51.2,31.2)*{}}; {\ar@{-} (51.2,31.2)*{}; (51.2,30.7)*{}};
(30,10)*+{};(70,50)*+{}
**\crv~Lc{~**\dir{}
~*{} (40,35)*{}&(60,25)}*{}
\endxy
\]
\caption{The iteration of $h(t)=q t^n$,\;
$t\in \mathbb R,\;n\in 2 \mathbb N+1,\;0<q<1$.}
\label{figure 1}
\end{figure}


\subsection{Power quantum differentiation}
\label{subsec:diff}

We introduce the $\h$-power difference operator as follows:

\begin{defn}
\label{def:qp:diff}
Assume that $f$ is a real function defined on $\R$. The $\h$-power operator is given by
\begin{equation*}
\dd f(t) := \left\{
\begin{array}{lcl}
\displaystyle\frac{f(qt^n)-f(t)}{qt^n -t} &\mbox{if}&t\in \R\setminus S,\\
f^{\prime}(t) &\mbox{if}&t\in S,
\end{array}\right.
\end{equation*}
provided $f$ is differentiable at $t\in S$. If $\dd f(t)$ exists,
we say that $f$ is $\h$-differentiable at $t$.
\end{defn}

The following lemma is a direct consequence
of Definition~\ref{def:qp:diff}.

\begin{lem}
Let $f$ be a real function and $t\in \R $.
\begin{itemize}
\item[(i)] If $f$ is $\h$-differentiable at $t$, $t\in S$, then $f $ is continuous at $t$.
\item[(ii)] If $f$ is $\h$-differentiable on an interval $I\subset(-\w,\w)$, $0\in I$, and
$$
\dd f(t)=0 \text{ for } t\in I,
$$
then $f$ is a constant function on $I$.
\item[(iii)] If $f$ is $\h$-differentiable at $t$, then
$f(\fft)=f(t) + (q t^n - t) \dd f (t)$.
\end{itemize}
\end{lem}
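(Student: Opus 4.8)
The plan is to dispatch the three items in increasing order of difficulty, with (i) and (iii) essentially immediate and all the real content lying in (ii).

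Item (i) is immediate from Definition~\ref{def:qp:diff}: when $t\in S$ one has $\dd f(t)=f'(t)$, so saying that $f$ is $\h$-differentiable at such a $t$ is exactly saying that $f$ is differentiable at $t$ in the ordinary sense, and ordinary differentiability implies continuity.

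For (iii) I would split into the two cases of Definition~\ref{def:qp:diff}. If $t\in\R\setminus S$, then $\fft-t\neq 0$ (otherwise $t$ would be a fixed point of $h(t)=\fft$; but the fixed points are exactly $t=0$ and, when $n\geq 3$, $t=\pm\w$, since $qt^{n-1}=1$ forces $|t|=q^{1/(1-n)}=\w$ for $n-1$ even --- that is, the fixed point set is $S$), so the asserted identity is nothing but the difference quotient defining $\dd f(t)$ cleared of its denominator. If $t\in S$, one checks directly that $t$ is a fixed point of $h$ --- trivially for $t=0$, and for $t=\pm\w$ because $q\w^n=q^{1+n/(1-n)}=q^{1/(1-n)}=\w$, the sign being preserved since $n$ is odd --- so $\fft=t$, both sides of the identity collapse to $f(t)$, and nothing more is needed.

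The substance is (ii). Since $0\in I$ and $f$ is $\h$-differentiable at $0$, part (i) gives that $f$ is continuous at $0$. I would next show that $I$ is invariant under $h$. For $0<t<\w$ one has $qt^n>0$ and $qt^{n-1}<1$ (equivalently $t<\w$), hence $0<h(t)<t$; since $n$ is odd, $h$ preserves sign, so for $-\w<t<0$ the same inequality applied to $|t|$ gives $t<h(t)<0$. Because $I$ is an interval containing $0$, in either case the segment with endpoints $0$ and $t$ lies in $I$, so $h(t)\in I$; by induction $h^k(t)\in I$ for all $k\geq 0$. Now fix $t\in I$. The case $t=0$ is trivial, so assume $t\neq 0$, whence $h^k(t)\neq 0$ for every $k$ ($h$ being a bijection that fixes $0$). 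Applying the hypothesis $\dd f\equiv 0$ at the point $h^k(t)\in I$ gives $f(h^{k+1}(t))=f(h^k(t))$, so by iteration $f(h^k(t))=f(t)$ for all $k$. By Lemma~\ref{Hn To w0}, $h^k(t)\To 0$ as $k\To\infty$ since $-\w<t<\w$, and continuity of $f$ at $0$ then yields $f(t)=\lim_{k\To\infty}f(h^k(t))=f(0)$. Hence $f$ equals the constant $f(0)$ throughout $I$.

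The only genuine obstacle --- and it is a mild one --- is in (ii): one must simultaneously ensure that the orbit $t,h(t),h^2(t),\dots$ stays inside $I$, so that the hypothesis $\dd f\equiv 0$ may be invoked at each of its points, and that it converges to $0$, so that continuity at $0$ closes the argument. Both facts rely precisely on the standing assumption $I\subset(-\w,\w)$ together with the asymptotics of $h^k$ from Lemma~\ref{Hn To w0}; the remaining steps are routine bookkeeping.
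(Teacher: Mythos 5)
Your proof is correct. The paper offers no argument at all for this lemma --- it merely asserts that it is ``a direct consequence of Definition~\ref{def:qp:diff}'' --- and your write-up supplies exactly the details one would expect: the fixed-point set of $h$ being $S$ for (i) and (iii), and for (ii) the invariance of $I$ under $h$ together with the convergence $h^k(t)\To 0$ and continuity at $0$.
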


The next theorem gives useful formulas for the computation
of $\h$-derivatives of sums, products, and quotients
of $\h$-differentiable functions.

\begin{thm}
\label{forward sum-con-multi-divi}
Assume $f, g :\R\longrightarrow \mathbb{R}$ are $\h$-differentiable  at $t \in  \R $. Then:
\begin{itemize}
\item[(i)] The sum  $ f+g:\R\longrightarrow \mathbb{R}$ is $\h$-differentiable at $t$ and
\[
\dd (f+g) (t)= \dd f (t) +\dd g (t).
\]
\item[(ii)] For any constant $c$, $c f:\R\longrightarrow \mathbb{R}$ is
$\h$-differentiable at $t$ and
\[ \dd (c f) (t)= c \dd f (t).\]
\item[(iii)] The product $ f g:\R\longrightarrow \mathbb{R}$ is
$\h$-differentiable at $t$ and
\begin{equation*}
\begin{split}
\dd (fg) (t)
&= \dd f(t) g(t)+  f (\fft) \dd g (t)\\
&= f(t) \dd g (t) + \dd f (t) g (\fft).
\end{split}
\end{equation*}
\item[(iv)] If $ g(t)g(\fft ) \neq 0 $, then $ {f}/{g}$ is
$\h$-differentiable at $t$ and
\[
\dd \left(\frac {f}{g}\right)(t)=\frac {\dd f (t) g(t)
- f(t)\dd g (t)}{g(t)g(\fft)}.
\]
\end{itemize}
\end{thm}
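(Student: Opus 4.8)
The plan is to split the argument according to the two cases in Definition~\ref{def:qp:diff}: the generic points $t\in\R\setminus S$, where $\dd$ is an honest difference quotient over the nonzero denominator $\fft-t$, and the singular points $t\in S$, where $\dd$ is the ordinary derivative. At a point $t\in S$ the four claims reduce to the classical rules for sums, scalar multiples, products, and quotients of functions differentiable at $t$; the only thing that then needs checking is that the right-hand sides written in (iii) and (iv) actually collapse to the classical expressions $f'g+fg'$ and $(f'g-fg')/g^{2}$. For this I would record the elementary fact that $S$ is precisely the set of fixed points of $h(t)=\fft$ (indeed $h(0)=0$, and when $n$ is odd $h(\pm\w)=\pm q\w^{n}=\pm q^{1+\frac{n}{1-n}}=\pm\w$), so that $f(\fft)=f(t)$ and $g(\fft)=g(t)$ whenever $t\in S$; combined with the lemma preceding this theorem, which guarantees that $\h$-differentiability at such a $t$ forces continuity there, this reconciles the two regimes.

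For $t\in\R\setminus S$ everything is elementary algebra over the common denominator $\fft-t\neq0$. Parts (i) and (ii) follow at once by splitting a single difference quotient. For (iii) I would begin from $\dd (fg)(t)=\frac{f(\fft)g(\fft)-f(t)g(t)}{\fft-t}$ and insert $\mp f(\fft)g(t)$ in the numerator to get the first form $\dd f(t)\,g(t)+f(\fft)\,\dd g(t)$; inserting $\mp f(t)g(\fft)$ instead gives the second, symmetric form (equivalently, the second form is (iii) applied to the pair $g,f$). The existence of $\dd (fg)(t)$ is then automatic, since its difference quotient is a finite algebraic combination of $\dd f(t)$ and $\dd g(t)$. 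For (iv), under the hypothesis $g(t)g(\fft)\neq0$ the quotient $f/g$ is defined at both $t$ and $\fft$, and
\[
\dd \!\left(\frac{f}{g}\right)\!(t)=\frac{1}{\fft-t}\left(\frac{f(\fft)}{g(\fft)}-\frac{f(t)}{g(t)}\right)=\frac{f(\fft)g(t)-f(t)g(\fft)}{g(t)g(\fft)(\fft-t)};
\]
inserting $\mp f(t)g(t)$ in the numerator and regrouping yields $\dfrac{\dd f(t)\,g(t)-f(t)\,\dd g(t)}{g(t)g(\fft)}$.

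I do not anticipate a genuine obstacle: the computations are routine, and the only point that requires care is the bookkeeping at $t\in S$, where one must invoke both the fixed-point property of $h$ on $S$ and the continuity supplied by the preceding lemma in order to see that the quantum identities degenerate correctly into the classical ones. A minor additional remark worth including is that the two closed forms for $\dd (fg)$ in (iii) are genuinely equal, which is immediate after clearing denominators.
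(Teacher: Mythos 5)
Your proposal is correct and is precisely the ``direct calculation'' that the paper's one-line proof alludes to: algebraic manipulation of the difference quotient for $t\in\R\setminus S$, and reduction to the classical rules at $t\in S$ via the observation that $S$ consists exactly of the fixed points of $h(t)=qt^{n}$. The only cosmetic remark is that at $t\in S$ you do not actually need the continuity lemma --- the identity $f(qt^{n})=f(t)$ there is immediate from $qt^{n}=t$, with no limit involved.
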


\begin{proof}
The proof is done by direct calculations.
\end{proof}

Next example gives explicit formulas for the $\h$-derivative
of some simple functions.

\begin{exm}
\label{derivative}
Let $f:\R \To \R$.
\begin{itemize}
\item[(i)] If $f(t)=c$ for all $t\in \R$, where $c\in \R$ is a constant, then
$\dd f (t)= 0$.

\item[(ii)] If $f(t)=t$ for all $t\in \R$, then $\dd f(t)=1$.

\item[(iii)] If $f(t)=at-b$ for
all $t \in \R$, where $a,b$ are real constants,
then by Theorem~\ref{forward sum-con-multi-divi} we have $\dd f(t)=a$.

\item[(iv)] If $f(t)=t^2$ for all $t\in \R$, then $\dd f(t)=t+q t^{n}$.

\item[(v)] If $f(t)=\frac{1}{t}$
for all $t\in \R \setminus \{0\}$, then $\dd f(t)=-\frac{1}{q t^{n+1} }$.

\item[(vi)] If $f(t)=(t+b)^m$ for all
$t\in \R$, where $b\in \R$ is a constant and $m \in \N$,
then, by induction on $m$, we obtain that
\begin{equation*}
\dd f(t)=\sum\limits_{k=0}^{m-1}(\fft+b)^k (t+b)^{m-1-k}
\end{equation*}
for $t\ne S$.
\end{itemize}
We note that by definition of the
$\h$-difference operator, one has
$$
\dd f(t)=f^\prime(t), \quad t\in S,
$$
for all functions $f$ in (i)--(vi).
\end{exm}

\begin{defn}
Let $f:\R\To \R $. We define the second $\h$-derivative by
$\dd^2 f:=\dd(\dd f)$. More generally, we define $\dd^m f$
as follows:
\begin{gather*}
\dd^0 f = f,\\
\dd^m f = \dd \dd^{m-1} f, \quad m\in \mathbb{N}.
\end{gather*}
\end{defn}

We now obtain, under certain conditions,
the formula for the $m$th $\h$-derivative of $fg$,
$m\in \mathbb{N}$.

Let $h$ be the function defined in Lemma~\ref{Hn To w0}
and let us write $h^{\circ}f$ to denote $f\circ h$.
We will denote by $\mathcal{S}^{m}_k$ the set consisting
of all possible strings of length $m$,
containing exactly $k$ times ${h^\circ}$ and $m-k$ times $\dd$.

\begin{exm}
Let $k = 2$ and $m=4$. Then,
\begin{multline*}
\mathcal{S}^{4}_{2}
=\Bigl\{\dd \dd {h^\circ} {h^\circ},\,
\dd{h^\circ}\dd{h^\circ},\, \dd{h^\circ}{h^\circ}\dd,\,
{h^\circ}{h^\circ}\dd\dd,\\
{h^\circ}\dd{h^\circ}\dd,\, {h^\circ}\dd\dd{h^\circ} \Bigr\}\, .
\end{multline*}
\end{exm}

\begin{exm}
If $m=2$, then for $k = 0,1,2$ we have
\begin{equation*}
\mathcal{S}^{2}_{0} =\Bigl\{\dd \dd \Bigr\},\quad
\mathcal{S}^{2}_{1} =\Bigl\{\dd {h^\circ},{h^\circ}\dd \Bigr\},\quad
\mathcal{S}^{2}_{2} =\Bigl\{{h^\circ}{h^\circ} \Bigr\}.
\end{equation*}
Let $f:\R\To \R$. Then,
\begin{equation*}
\begin{split}
(\dd {h^\circ}f)(t)&=\dd (f\circ h)(t),\\
({h^\circ}\dd f)(t)&=\dd f(h(t)),
\end{split}
\end{equation*}
provided all these quantities exist.
Observe that $\dd (f\circ h)(t)\neq \dd f(h(t))$.
Indeed,
\begin{equation*}
\begin{split}
\dd f(h(t))&=\frac{f(h(h(t)))-f(h(t))}{h(h(t))-h(t)}\\
&=\frac{f(h(h(t)))-f(h(t))}{(h(t)-t)(1+\dd(h(t)-t))}
=\frac{\dd (f\circ h)(t)}{(1+\dd(h(t)-t))}.
\end{split}
\end{equation*}
\end{exm}

\begin{thm}[Leibniz formula]
\label{leibniz formula Thm}
Let $\mathcal{S}^{m}_k$ be the set consisting  of all possible strings of length $m$,
containing exactly $k$ times ${h^\circ}$ and $m-k$ times $\dd$.
If $f$ is a function for which
$L f$ exist for all $L \in \mathcal{S}^{m}_k$, and function $g$ is $m$ times $\h$-differentiable, then
for all $m \in \N$ we have:
\begin{equation}
\label{leibniz formula 1}
\dd^m (fg)(t)=\sum\limits_{k=0}^{m} \left(\sum\limits_{L \in
\mathcal{S}^{m}_k} L f\right) (t)\,\dd^k g (t)\quad \mbox{ for }\; t\in \R\setminus S,
\end{equation}
and
\begin{equation}
\label{leibniz formula 2}
\dd^m (fg)(t)=\displaystyle \sum_{k=0}^{m}
\left(\begin{array}{*{20}c}
m\\
k
\end{array}\right)
\dd^{m-k} f(t )\; \dd^k  g (t)
\quad \mbox{for}\;t \in S.
\end{equation}
\end{thm}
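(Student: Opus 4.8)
The plan is to establish \eqref{leibniz formula 1} by induction on $m$ and then to read off \eqref{leibniz formula 2} by specializing the result to the points of $S$.

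For \eqref{leibniz formula 1}, the case $m=1$ is exactly the product rule of Theorem~\ref{forward sum-con-multi-divi}(iii), since $\mathcal{S}^{1}_{0}=\{\dd\}$ and $\mathcal{S}^{1}_{1}=\{{h^\circ}\}$ make its right-hand side $\dd f(t)\,g(t)+f(\fft)\,\dd g(t)$. For the induction step, abbreviate $P^{m}_{k}:=\sum_{L\in\mathcal{S}^{m}_{k}}Lf$, so that the statement for exponent $m$ reads $\dd^{m}(fg)=\sum_{k=0}^{m}P^{m}_{k}\,\dd^{k}g$; applying $\dd$ to both sides, splitting the sum by linearity (Theorem~\ref{forward sum-con-multi-divi}(i),(ii)) and invoking the product rule on each summand $P^{m}_{k}\cdot\dd^{k}g$ gives
\[
\dd^{m+1}(fg)(t)=\sum_{k=0}^{m}\Bigl[(\dd P^{m}_{k})(t)\,\dd^{k}g(t)+({h^\circ}P^{m}_{k})(t)\,\dd^{k+1}g(t)\Bigr].
\]
This step rests on two routine facts: by Lemma~\ref{Hn To w0}, $h$ is a bijection of $\R$ that fixes $S$ pointwise ($q\theta^{n}=\theta$, and $q(-\theta)^{n}=-\theta$ since $n$ is odd), hence maps $\R\setminus S$ onto itself, so for $t\in\R\setminus S$ one has $\fft\in\R\setminus S$ and every difference quotient above is defined; and the hypothesis on $f$ makes each $P^{m}_{k}$ a finite sum of $\h$-differentiable functions, hence itself $\h$-differentiable, which is what Theorem~\ref{forward sum-con-multi-divi}(iii) requires. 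The combinatorial core is then the decomposition $\mathcal{S}^{m+1}_{j}=\{\dd L:L\in\mathcal{S}^{m}_{j}\}\sqcup\{{h^\circ}L:L\in\mathcal{S}^{m}_{j-1}\}$ (with $\mathcal{S}^{m}_{-1}=\mathcal{S}^{m}_{m+1}=\emptyset$): every string of length $m+1$ begins with $\dd$ or with ${h^\circ}$, and deleting that leading symbol leaves a length-$m$ string with the appropriate number of ${h^\circ}$'s. Since $\dd$ and ${h^\circ}$ are linear, $\dd P^{m}_{k}=\sum_{L\in\mathcal{S}^{m}_{k}}(\dd L)f$ and ${h^\circ}P^{m}_{k}=\sum_{L\in\mathcal{S}^{m}_{k}}({h^\circ}L)f$; shifting the index of the second sum by $k\mapsto k+1$ and gathering, for each $j$, the terms multiplying $\dd^{j}g(t)$ reproduces $\bigl(\sum_{L\in\mathcal{S}^{m+1}_{j}}Lf\bigr)(t)$, which is \eqref{leibniz formula 1} for $m+1$.

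It remains to pass from \eqref{leibniz formula 1} to \eqref{leibniz formula 2}, i.e.\ to evaluate the former at $t\in S$. There $\dd$ is the ordinary derivative and $\fft=t$, so ${h^\circ}$ acts as the identity when evaluated at $t$; the aim is to show that consequently each of the $|\mathcal{S}^{m}_{k}|=\binom{m}{k}$ strings in $\mathcal{S}^{m}_{k}$ contributes $\dd^{m-k}f(t)$ at $t$, turning $\bigl(\sum_{L\in\mathcal{S}^{m}_{k}}Lf\bigr)(t)$ into $\binom{m}{k}\dd^{m-k}f(t)$ and \eqref{leibniz formula 1} into \eqref{leibniz formula 2}. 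Making this reduction precise is the step I expect to be the main obstacle: an occurrence of ${h^\circ}$ in the interior of a string is acted on by further $\dd$'s afterwards, so it cannot simply be deleted — one must control how differentiation interacts with precomposition by $h$ on a whole neighbourhood of the fixed point $t\in S$, not merely the value of $h$ at $t$. Carrying out this bookkeeping carefully is where the real work lies; a direct induction on $m$ for \eqref{leibniz formula 2}, with base case the product rule at $t\in S$ (using $f(\fft)=f(t)$), meets the same difficulty at its inductive step.
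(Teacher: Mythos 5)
Your treatment of \eqref{leibniz formula 1} is correct and is essentially the paper's own proof: the same base case from Theorem~\ref{forward sum-con-multi-divi}(iii), the same application of the product rule to each summand $\bigl(\sum_{L\in\mathcal{S}^{m}_{k}}Lf\bigr)\cdot\dd^{k}g$, and the same decomposition of $\mathcal{S}^{m+1}_{j}$ according to the leading symbol. The remark that $h$ fixes $S$ pointwise and hence maps $\R\setminus S$ onto itself is a useful point that the paper leaves implicit.

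The genuine gap is \eqref{leibniz formula 2}, which you explicitly leave unproven. The paper does not derive it from \eqref{leibniz formula 1} at all (that route is blocked for the reasons you give, and also because \eqref{leibniz formula 1} is only asserted on $\R\setminus S$). Instead it treats $t\in S$ as a purely classical case: it reads $\dd^{m-k}f(t)$, $\dd^{k}g(t)$ and $\dd^{m}(fg)(t)$ as $f^{(m-k)}(t)$, $g^{(k)}(t)$ and $(fg)^{(m)}(t)$, and declares \eqref{leibniz formula 2} to be the standard Leibniz formula of classical calculus. Your suspicion that the real work is hidden in this case is vindicated: the identification $\dd^{m}F(t)=F^{(m)}(t)$ on $S$ holds by definition only for $m=1$; for $m\geq 2$ one has $\dd^{2}F(t)=(\dd F)'(t)$, the classical derivative at $t$ of a function defined by difference quotients off $S$, and this need not equal $F''(t)$. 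Concretely, for $n=1$ and $F(t)=t^{2}$ one computes $\dd F(t)=(1+q)t$, hence $\dd^{2}F(0)=1+q\neq 2=F''(0)$; taking $f(t)=g(t)=t$, $m=2$, $t=0$ then gives $\dd^{2}(fg)(0)=1+q$, while the right-hand side of \eqref{leibniz formula 2} equals $2$. So you have not proved \eqref{leibniz formula 2}, but neither, in any honest sense, has the paper: the formula as stated appears to fail for $m\geq 2$, and the case $t\in S$ requires either a corrected statement or additional hypotheses rather than the one-line appeal to the classical Leibniz rule.
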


\begin{proof}
For $t\in S$ equality \eqref{leibniz formula 2} yields
$$
(fg)^{(m)} (t)
=\sum_{k=0}^{m} \left(\begin{array}{*{20}c}
m\\ k \end{array}\right)   f^{(m-k)}(t )\;  g^{(k)} (t),
$$
which is true (the standard Leibniz formula of classical calculus).
Assume $t\notin S$. The proof is done by induction on $m$.
If $m=1$, then by Theorem~\ref{forward sum-con-multi-divi} we have
$\dd (fg)(t)=\dd f(t) g(t) + h^\circ f(t)\dd g(t)$,
\textrm{i.e.}, \eqref{leibniz formula 1} is true for
$m=1$. We now assume that \eqref{leibniz formula 1} is true for $m=s$
and prove that it is also true for $m=s+1$.
First, we note that for $k \in\N$ and $t \notin S$
\begin{equation*}
\begin{split}
&\dd^{m+1}(fg)(t) = \dd \left[\sum \limits_{k=0}^{m}
\left(\sum \limits_{L \in
\mathcal{S}^{m}_k}L f \right)(t) \;\dd^k g(t) \right] \\
&= \sum \limits_{k=0}^{m}\left[
\dd\left(\sum\limits_{L \in \mathcal{S}^{m}_k}
L f\right)(t)\; {\dd^{k}}g (t)+{h^\circ}\left(  \sum \limits_{L \in
\mathcal{S}^{m}_k}L f\right)(t)\; \dd^{k+1}g(t)
\right]\\
&= \sum\limits_{k=0}^{m}
\left(\sum\limits_{L \in \mathcal{S}^{m}_k}{\dd L}f\right)(t)\;
{\dd^k}g(t) +\sum\limits_{k=1}^{m+1} \left(  \sum\limits_{L \in
\mathcal{S}^{m}_{k-1}}  h^\circ L f\right)(t)\;{\dd^k}g(t)\\
&= \left(\sum\limits_{L \in \mathcal{S}^{m}_m} h^\circ L f\right)(t)\;
\dd^{m+1}g(t)+\left( \sum\limits_{L \in \mathcal{S}^{m}_0}\dd L f\right)(t)\; g(t)\\
& \qquad +\sum\limits_{k=1}^{m}
\left( \sum\limits_{L \in \mathcal{S}^{m}_{k-1}}{{h^\circ}L }\, f (t)
+\sum\limits_{L \in \mathcal{S}^{m}_{k}}{\dd L} \,f\right)(t)\; {\dd^k}g(t)\\
&= \left( \sum\limits_{L \in \mathcal{S}^{m+1}_{m+1}}L f\right)(t)\;
\dd^{m+1} g(t)+ \left(\sum\limits_{L \in
\mathcal{S}^{m+1}_0} L f \right)(t)\; g(t)\\
&\qquad + \sum\limits_{k=1}^{m}
\left( \sum\limits_{L \in \mathcal{S}^{m+1}_{k}} L f \right)(t)\;{\dd^k} g(t)\\
&= \sum\limits_{k=0}^{m+1} \left(\sum\limits_{L
\in \mathcal{S}^{m+1}_k}L f\right)(t) {\dd^k} g(t).
\end{split}
\end{equation*}
We conclude that \eqref{leibniz formula 1} is true for $m=s+1$. Hence, by
mathematical induction, \eqref{leibniz formula 1} holds for all
$m \in \N$ and $t\in \R\setminus S$.
\end{proof}

The standard chain rule of classical calculus does not necessarily hold true
for the $\h$-quantum calculus. For example, if we assume that
$f,g:\R \To \R $ are defined by $f(t)=t^2$ and $ g(t)=qt$,
then we have
\begin{equation*}
\begin{split}
\dd (f\circ g) (t)&=\dd (qt)^2=\dd (q^2 t^2)=q^2(t+qt^n)\\
&\ne q^2 (t+q^n t^n )= \dd f (g(t))\cdot \dd g(t).
\end{split}
\end{equation*}
However, we can derive an analogous formula of the chain rule for
our power quantum calculus.

\begin{thm}[power chain rule]
Assume $g:I \To \R$ is continuous and $\h$-differentiable,
and $f:\R \To \R$ is continuously differentiable.
Then there exists a constant $c$ between $q t^n$ and $t$ with
\begin{equation}
\label{cont. chain rule}
\dd (f\circ g) (t)=f^\prime (g(c)) \dd g (t).
\end{equation}
\end{thm}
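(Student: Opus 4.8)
The plan is to split into the two cases dictated by Definition~\ref{def:qp:diff}. For $t\in S$ the point $c=t$ works: since $h(t)=qt^n=t$ for $t\in S$, the interval "between $qt^n$ and $t$" degenerates to $\{t\}$, and $\dd(f\circ g)(t)=(f\circ g)'(t)=f'(g(t))g'(t)=f'(g(c))\dd g(t)$ by the classical chain rule (using that $g$ differentiable at $t\in S$ is part of $\h$-differentiability there). So the substance is the case $t\in\R\setminus S$, where $qt^n\neq t$.

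For $t\in\R\setminus S$ I would write the difference quotient explicitly:
\[
\dd(f\circ g)(t)=\frac{f(g(qt^n))-f(g(t))}{qt^n-t}.
\]
If $g(qt^n)=g(t)$, then the numerator vanishes and also $\dd g(t)=0$, so both sides of \eqref{cont. chain rule} are $0$ and any $c$ works. Otherwise, apply the classical Mean Value Theorem to $f$ on the interval with endpoints $g(t)$ and $g(qt^n)$: there is a point $\xi$ strictly between $g(t)$ and $g(qt^n)$ with
\[
f(g(qt^n))-f(g(t))=f'(\xi)\bigl(g(qt^n)-g(t)\bigr).
\]
Dividing by $qt^n-t$ gives $\dd(f\circ g)(t)=f'(\xi)\,\dd g(t)$. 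It remains to produce a $c$ between $qt^n$ and $t$ with $g(c)=\xi$: since $g$ is continuous on $I$ and $\xi$ lies between $g(t)$ and $g(qt^n)$, the Intermediate Value Theorem applied to $g$ on the closed interval with endpoints $t$ and $qt^n$ yields such a $c$ between $qt^n$ and $t$, and then $f'(g(c))=f'(\xi)$, which is \eqref{cont. chain rule}.

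**The main obstacle** I anticipate is bookkeeping around the degenerate sub-cases rather than any deep difficulty: one must handle $g(qt^n)=g(t)$ separately so the MVT is not applied to a trivial interval, and one must be careful that the interval on which the IVT is invoked — namely between $t$ and $qt^n$ — is contained in the domain $I$ of $g$, which should be arranged by taking $I$ to be an interval (or by restricting attention to $t$ such that both $t$ and $qt^n$ lie in $I$). I would also remark that continuity of $f'$ is not actually needed for this argument (plain differentiability of $f$ suffices for the MVT step); the stated hypothesis "continuously differentiable" is simply a convenient sufficient condition and mirrors the analogous time-scale chain rule in \cite{Bang04,Bang05}.
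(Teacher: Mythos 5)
Your proposal is correct and follows essentially the same route as the paper's own proof: write the difference quotient for $t\notin S$, dispose of the degenerate case $g(qt^n)=g(t)$, apply the classical Mean Value Theorem to $f$ between $g(t)$ and $g(qt^n)$, and use continuity of $g$ (Intermediate Value Theorem) to pull the intermediate value back to a point $c$ between $qt^n$ and $t$, with the case $t\in S$ handled by the classical chain rule. Your extra remarks (that $S$ consists of fixed points of $h$, and that plain differentiability of $f$ would suffice for the MVT step) are accurate but do not change the argument.
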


\begin{proof}
For $t\notin S$ we have
\[
\dd (f\circ g) (t)=\frac{f(g(qt^n))-f(g(t))}{qt^n-t}.
\]
We may assume that $g(qt^n)\neq g(t)$ (because if $g(qt^n)=g(t)$,
then $\dd (f\circ g ) (t)= \dd g (t)=0$ and
\eqref{cont. chain rule} holds for any $c$ between $qt^n$ and $t$).
Then,
\begin{equation}
\label{cont. chain rule-2}
\dd (f \circ g) (t) =  \frac{f(g(qt^n))-f(g(t))}{g(qt^n)-g(t)}\cdot
\frac{g(qt^n)-g(t)}{qt^n-t}.
\end{equation}
By the mean value theorem, there exists a real
number $\tau$ between $g(t)$ and $g(qt^n)$ with
\begin{equation}
\label{cont. chain rule-3}
\frac{f(g(qt^n))-f(g(t))}{g(qt^n)-g(t)}=f^\prime (\tau ).
\end{equation}
In view of the continuity of $g$, there exists $c$ in the interval
with end points $ qt^n$ and $t $ such that $g(c)=\tau$.
Thus from \eqref{cont. chain rule-2} and \eqref{cont. chain rule-3}
we obtain \eqref{cont. chain rule}. Relation \eqref{cont. chain rule}
is true at $t,\,t \in S$, by the classical chain rule.
\end{proof}


\subsection{Power quantum integration}
\label{subsec:int}

In this section we are interested to study the inverse operation of
$\dd f$. We call this inverse the $\h$-integral of $f$ (or the power
quantum integral). We define the interval $I$ to be $(-\w,\w).$

\begin{defn}
Let $f: I \To \R$ and $a,b \in I$. We say that $F$
is a $\h$-antiderivative of $f$ on $I$
if $\dd F(t) =f(t)$ for all $t\in I$.
\end{defn}

>From now on we assume that all series considered along the text are
convergent.
\begin{thm}
\label{Anti derivative}
Let $f: I \To \R$ and $a,b \in I$. The function
\[
F(t) = - \displaystyle\sum_{k=0}^{\infty}
q^{[k]_n}t^{n^k}\left(q^{n^k} t^{n^k (n-1)}-1\right)
f\left(q^{[k]_n}t^{n^k}\right)
\]
is a $\h$-antiderivative of $f$ on $I$, provided $f$ is continuous
at $0$.
\end{thm}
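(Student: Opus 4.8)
The plan is to recognize $F$ as a generalized N\"orlund--Jackson (telescoping) sum built from the iterates $h^k$ of Lemma~\ref{Hn To w0}. First I would simplify the summand: since $[k+1]_n=[k]_n+n^k$ and $n^{k+1}-n^k=n^k(n-1)$, one has
\[
q^{[k]_n}t^{n^k}\bigl(q^{n^k}t^{n^k(n-1)}-1\bigr)=q^{[k+1]_n}t^{n^{k+1}}-q^{[k]_n}t^{n^k}=h^{k+1}(t)-h^k(t),
\]
so that
\[
F(t)=\sum_{k=0}^{\infty}\bigl(h^k(t)-h^{k+1}(t)\bigr)f\bigl(h^k(t)\bigr).
\]

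Next I would treat a generic point $t\in I\setminus S$, i.e.\ $t\neq 0$ (recall $I\cap S=\{0\}$). Replacing $t$ by $h(t)=\fft$ and shifting the summation index gives $F(\fft)=\sum_{k=1}^{\infty}\bigl(h^k(t)-h^{k+1}(t)\bigr)f\bigl(h^k(t)\bigr)$, so the difference $F(t)-F(\fft)$ is exactly the $k=0$ term, namely $(t-\fft)f(t)$. Dividing by $\fft-t$ yields $\dd F(t)=\dfrac{F(\fft)-F(t)}{\fft-t}=f(t)$, as required.

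The remaining and most delicate case is $t=0\in S$, where by Definition~\ref{def:qp:diff} we must show $F^{\prime}(0)$ exists and equals $f(0)$. Since $h^k(0)=0$ for every $k$, we have $F(0)=0$, so it suffices to prove $\lim_{t\to 0}F(t)/t=f(0)$. Two structural facts about the weights $w_k(t):=\bigl(h^k(t)-h^{k+1}(t)\bigr)/t$ drive the argument: (a) telescoping together with $h^k(t)\to 0$ on $(-\w,\w)$ (Lemma~\ref{Hn To w0}) gives $\sum_{k=0}^{\infty}w_k(t)=1$ for $0<|t|<\w$; and (b) for $0<|t|<\w$ one has $|h(t)|=q|t|^n<|t|$ (equivalently $q|t|^{n-1}<1$, which is precisely $|t|<\w=q^{1/(1-n)}$), hence by induction $|h^k(t)|\le|t|$, and since $n$ is odd each $h^k(t)$ keeps the sign of $t$, so every $w_k(t)\ge 0$. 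Then $F(t)/t-f(0)=\sum_{k=0}^{\infty}w_k(t)\bigl(f(h^k(t))-f(0)\bigr)$; given $\e>0$, pick $\delta$ from continuity of $f$ at $0$ so that $|f(s)-f(0)|<\e$ for $|s|<\delta$, and for $0<|t|<\min\{\delta,\w\}$ all points $h^k(t)$ lie in $(-\delta,\delta)$, whence $|F(t)/t-f(0)|\le\e\sum_k w_k(t)=\e$. This gives $\dd F(0)=F^{\prime}(0)=f(0)$ and finishes the proof.

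I expect step (b) --- the uniform contraction estimate $|h^k(t)|\le|t|$ on $I$ and the resulting nonnegativity of the telescoping weights --- to be the crux, since it is exactly what permits exchanging the limit $t\to 0$ with the infinite sum using nothing more than continuity of $f$ at the single point $0$. Everything else (the algebraic identity for the summand and the index shift in the generic case) is routine, and convergence of the series involved is granted by the standing assumption made just before the statement.
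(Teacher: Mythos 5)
Your proof is correct and follows essentially the same route as the paper: the summand is rewritten as the telescoping difference $h^{k+1}(t)-h^{k}(t)$, so that for $t\neq 0$ only the $k=0$ term survives in $F(t)-F(qt^{n})$, and the case $t=0$ is handled via continuity of $f$ at $0$. Your treatment of $t=0$ is in fact more careful than the paper's, which passes the limit $s\to 0$ inside the infinite sum without justification, whereas your observation that the weights $w_{k}(t)$ are nonnegative and sum to $1$ on $(-\theta,\theta)$ supplies exactly the missing argument.
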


\begin{proof}
For $t\ne 0, $ we have
\[\begin{array}{lll}
\dd F(t)&=& \displaystyle\frac{F(qt^n)-F(t)}{qt^n-t}\\
&=&\displaystyle\sum_{k=0}^{\infty} \left[-
\frac{q^{[k+1]_n}t^{n^{k+1}}}{{qt^n-t}}\left(q^{n^{k+1}} t^{n^{k+1}
(n-1)}-1\right)
f\left(q^{[k+1]_n}t^{n^{k+1}}\right)\right.\\
&&+\displaystyle \left.\frac{q^{[k]_n}t^{n^k}\left(q^{n^k} t^{n^k
(n-1)}-1\right)}{{{qt^n-t}}}
f\left(q^{[k]_n}t^{n^k}\right) \right] \\
&=&f(t).
\end{array}\]
If $t=0$, then the continuity of $f$ at $0$ implies that
\begin{equation*}
\begin{split}
\dd F(0)&= \lim_{s\To
0}\displaystyle\frac{F(s)-F(0)}{s} \\
&= \lim_{s\To 0}\displaystyle
\frac{-\sum_{k=0}^{\infty} q^{[k]_n}s^{n^k}\left(q^{n^k}
s^{n^k (n-1)}-1\right) f\left(q^{[k]_n}s^{n^k}\right)}{s}\\
&=\lim_{s\To 0}
{ -\sum_{k=0}^{\infty} q^{[k]_n}s^{n^k-1}\left(q^{n^k}
s^{n^k (n-1)}-1\right) f\left(q^{[k]_n}s^{n^k}\right)}\\
&=\lim_{s\To 0}{-\sum_{k=0}^{\infty} \left(q^{[k+1]_n}s^{n^k+n^k(n-1)-1}-q^{[k]_n}
s^{n^k -1}\right) f\left(q^{[k]_n}s^{n^k}\right)}\\
&=\lim_{s\To 0}{-\sum_{k=0}^{\infty} \left(q^{[k+1]_n}s^{n^{k+1}-1}
-q^{[k]_n} s^{n^k -1}\right) f\left(q^{[k]_n}s^{n^k}\right)}\\
&=\lim_{s\To 0} f(s)\\
&=f(0).
\end{split}
\end{equation*}
This completes the proof.
\end{proof}
We then define the indefinite $\h$-integral of $f$ by
\begin{equation*}
\int_{I} f(t)\ddd t := F(t) + C,
\end{equation*}
where $C$ is an arbitrary constant.
The definite $\h$-integral of $f$ is defined as follows.

\begin{defn}
Let $f: I \To \R$ and $a,b \in I$.
We define the $\h$-integral of $f$ from $a$ to $b$ by
\begin{equation}
\label{int a to b} \int_a ^b f(t)\ddd t := \int_{0}^b f(t)\ddd t -
\int_{0}^{a} f(t) \ddd t,
\end{equation}
where
\begin{equation}
\label{int w to x} \int_{0} ^x f(t)\ddd t := - \sum_{k=0}^{\infty}
q^{[k]_n}x^{n^k}\left(q^{n^k} x^{n^k (n-1)}-1\right)
f\left(q^{[k]_n}x^{n^k}\right), \quad x\in I\, ,
\end{equation}
provided the series at the right-hand side of
\eqref{int w to x} converge at $x=a$ and $x=b$.
\end{defn}

\begin{defn}
A function $f$ is said to be $\h$-integrable
on a subinterval $J$ of $I$ if
\[
\left|\int_{a}^{b} f(t) \ddd t\right|
< \infty \quad \mbox{for all} \;\; a,b \in J.
\]
\end{defn}

\begin{rem}
The integral formulas \eqref{int a to b} and \eqref{int w to x} yield
\eqref{Jackson integral a b} and \eqref{Jackson integral at 0}
when $n=1$; and yield the corresponding integral of the operator $D_n$
defined in \eqref{n power operator} when $q \To 1$.
\end{rem}

The following properties of the $\h$-integral are direct consequences
of the definition and provide extensions of analogous properties
of the Jackson $q$-integral \cite{Jack1,Jack2,Kac}.

\begin{lem}
\label{forwad integral props}
Let $f,g :I \To \R$ be $\h$-integrable, $k \in \R$, and $a,b,c \in I$. Then,
\begin{itemize}
\item[(i)]  $\displaystyle\int_{a}^{a}f(t)\ddd  t=0$.
\item[(ii)] $\displaystyle\int_{a}^{b}kf(t) \ddd t=k
\int_{a}^{b}f(t)\ddd t$.
\item[(iii)] $\displaystyle\int_{a}^{b}f(t)\ddd
t=-\int_{b}^{a}f(t)\ddd t$.
\item[(iv)] $\displaystyle\int_{a}^{b}f(t)\ddd t=\int_{a}^{c}f(t)\ddd t
+\int_{c}^{b}f(t) \ddd t$  for  $a\le c \le b$.
\item[(v)] $\displaystyle\int_{a}^{b}\left(f(t)+g(t)\right)\ddd t
=\int_{a}^{b}f(t)\ddd t+\int_{a}^{b} g(t) \ddd t$.
\end{itemize}
\end{lem}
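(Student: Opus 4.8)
The plan is to derive all five assertions from the two defining identities \eqref{int a to b} and \eqref{int w to x} alone, together with the standing assumption (stated just before Theorem~\ref{Anti derivative}) that all series in play are convergent; no new ideas beyond these are needed.

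First I would dispose of (i), (iii), and (iv) purely formally from \eqref{int a to b}. Taking $b=a$ makes the two terms on the right-hand side of \eqref{int a to b} identical, so they cancel and (i) follows. Interchanging the two terms on the right of \eqref{int a to b} gives $\int_a^b f = -\bigl(\int_0^a f - \int_0^b f\bigr) = -\int_b^a f$, which is (iii). For (iv) I would write each of $\int_a^c f$, $\int_c^b f$, and $\int_a^b f$ in the split form \eqref{int a to b}; adding the first two, the two occurrences of $\int_0^c f$ cancel and what remains is exactly $\int_0^b f - \int_0^a f = \int_a^b f$. Note that the hypothesis $a\le c\le b$ plays no role in this algebraic identity; it is needed only to guarantee that $c$ lies in $I$ and that the relevant series converge.

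For (ii) and (v) I would pass to the series representation \eqref{int w to x}. Writing $\int_0^x \phi(t)\,d_{n,q}t = \sum_{j=0}^{\infty} w_j(x)\,\phi\!\left(q^{[j]_n}x^{n^j}\right)$ with $w_j(x) := -q^{[j]_n}x^{n^j}\bigl(q^{n^j}x^{n^j(n-1)}-1\bigr)$, statement (ii) follows by factoring the constant $k$ out of the convergent sum, giving $\int_0^x (kf) = k\int_0^x f$, and then applying \eqref{int a to b} at $x=b$ and $x=a$. Statement (v) follows by splitting $\sum_{j} w_j(x)\bigl(f(\cdots)+g(\cdots)\bigr)$ into the two convergent series $\sum_{j} w_j(x)\,f(\cdots)$ and $\sum_{j} w_j(x)\,g(\cdots)$, so that $\int_0^x(f+g) = \int_0^x f + \int_0^x g$, and then using \eqref{int a to b} at $x=b$ and $x=a$ and regrouping the four resulting sums.

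I expect no genuine obstacle: every step is either a one-line consequence of \eqref{int a to b} or an elementary termwise manipulation of a convergent series. The only point deserving explicit mention is the term-by-term splitting of the series in (v) and the factoring of the constant in (ii); both are licensed by the convergence assumed throughout the text (strictly, by absolute convergence, which has been postulated globally). The most laborious part is merely the bookkeeping of the four sums entering (v), which is routine.
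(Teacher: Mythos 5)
Your proposal is correct and follows exactly the route the paper intends: the paper gives no written proof, stating only that these properties ``are direct consequences of the definition,'' and your derivation of (i), (iii), (iv) from the splitting formula \eqref{int a to b} and of (ii), (v) from termwise manipulation of the series \eqref{int w to x} is precisely the omitted routine verification. Your remark that $a\le c\le b$ is not needed for the algebra in (iv), and that the termwise splitting in (ii) and (v) rests on the convergence assumed globally in the text, is accurate.
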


\begin{thm}
\label{main_calc}
Assume that $f: I \To \R$ is continuous at $0$. Then,
\[
\int_{a}^{b} D_{n,q} f(t) d_{n,q} t
=f(b)-f(a)\quad \mbox{for all} \;\;a,b\in I.
\]
\end{thm}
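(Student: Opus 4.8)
The plan is to reduce the statement to the definition of the $\h$-integral together with Theorem~\ref{Anti derivative}, handling the cases $b\ne 0$ (equivalently $a\ne 0$) and the endpoint $0$ separately. First I would recall from \eqref{int a to b} that it suffices to prove the special case
\[
\int_{0}^{x} \dd f(t)\,d_{n,q} t = f(x)-f(0)\quad\text{for all }x\in I,
\]
since the general identity follows by subtracting the instances at $x=b$ and $x=a$ and using Lemma~\ref{forwad integral props}(i),(iii). Then I would write out the right-hand side using \eqref{int w to x} with the integrand $g:=\dd f$, namely
\[
\int_{0}^{x} \dd f(t)\,d_{n,q} t
= -\sum_{k=0}^{\infty} q^{[k]_n}x^{n^k}\bigl(q^{n^k} x^{n^k(n-1)}-1\bigr)\,
\dd f\!\left(q^{[k]_n}x^{n^k}\right).
\]

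Next, for $x\ne 0$, I would substitute the explicit value of $\dd f$ at the point $t_k:=q^{[k]_n}x^{n^k}=h^k(x)$. The key observation is that $q t_k^{\,n} = t_{k+1}$ (this is exactly the iteration identity $h^{k+1}=h\circ h^k$ from Lemma~\ref{Hn To w0}), and that the prefactor $q^{[k]_n}x^{n^k}\bigl(q^{n^k}x^{n^k(n-1)}-1\bigr)$ equals $t_k^{\,}\bigl(q t_k^{\,n-1}-1\bigr)= q t_k^{\,n}-t_k = t_{k+1}-t_k$. Hence each summand becomes
\[
-(t_{k+1}-t_k)\cdot\frac{f(t_{k+1})-f(t_k)}{t_{k+1}-t_k} = -\bigl(f(t_{k+1})-f(t_k)\bigr) = f(t_k)-f(t_{k+1}),
\]
so the series telescopes to $f(t_0)-\lim_{k\to\infty} f(t_k) = f(x) - f(0)$, where the last equality uses that $t_k = h^k(x)\to 0$ for $x\in I=(-\w,\w)$ by Lemma~\ref{Hn To w0} together with the continuity of $f$ at $0$.

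Finally I would dispose of the case $x=0$, which gives $0 = f(0)-f(0)$ trivially once one checks $\int_0^0 = 0$ (Lemma~\ref{forwad integral props}(i)), and note the boundary point $x\in S\cap I$ when $n=1$ is already covered. Assembling, $\int_a^b\dd f\,d_{n,q}t = \bigl(f(b)-f(0)\bigr)-\bigl(f(a)-f(0)\bigr)=f(b)-f(a)$. The main obstacle, and the place where I would be most careful, is justifying the passage to the limit in the telescoped partial sums: this is precisely where continuity of $f$ at $0$ and the convergence $h^k(x)\to 0$ are needed, and where one must also invoke the standing assumption that the series in \eqref{int w to x} converges (so that rearranging it as a telescoping sum is legitimate). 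A secondary, purely bookkeeping, point is verifying the algebraic identity $q^{[k]_n}x^{n^k}\bigl(q^{n^k}x^{n^k(n-1)}-1\bigr)=t_{k+1}-t_k$, which follows from $[k+1]_n=1+n[k]_n$.
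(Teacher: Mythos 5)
Your proposal is correct and follows essentially the same route as the paper: substitute the definition of $\dd f$ at the points $q^{[k]_n}x^{n^k}$, observe that the prefactor in \eqref{int w to x} equals the increment $q^{[k+1]_n}x^{n^{k+1}}-q^{[k]_n}x^{n^k}$ so the series telescopes, and use $\lim_{k\to\infty}q^{[k]_n}x^{n^k}=0$ together with continuity of $f$ at $0$ to evaluate the tail. The only cosmetic difference is that you treat the two endpoints via the single identity $\int_0^x \dd f = f(x)-f(0)$ and subtract, whereas the paper telescopes both series at once; the substance is identical.
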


\begin{proof}
First, we note that $\lim_{r\To
\infty}q^{[r]_{n}}a^{n^{r}}=\lim_{r\To
\infty}q^{[r]_{n}}b^{n^{r}}=0$. By the continuity of $f$ at $0$,
$$
\lim_{r\To 0}f(r)= \lim_{k\To \infty}f(q^{[k]_{n}}a^{n^{k}})
=\lim_{k\To \infty}f(q^{[k]_{n}}b^{n^{k}})=f(0).
$$
Thus,
\begin{equation*}
\begin{split}
\int_{a}^{b} & \dd f(t) \ddd t
= - \sum_{k=0}^{\infty} q^{[k]_n}b^{n^k}\left(q^{n^k}
b^{n^k (n-1)}-1\right)
\dd f\left(q^{[k]_n}b^{n^k}\right)\\
&\qquad + \sum_{k=0}^{\infty}
q^{[k]_n}a^{n^k}\left(q^{n^k} a^{n^k (n-1)}-1\right)
\dd f\left(q^{[k]_n}a^{n^k}\right)\\
&= \sum_{k=0}^{\infty}\left[
-q^{[k]_n}b^{n^k}\left(q^{n^k} b^{n^k (n-1)}-1\right)\frac{
f\left(q^{[k+1]_{n}}b^{n^{k+1}} \right)-f\left(q^{[k]_{n}}b^{n^{k}} \right)}
{q^{[k+1]_{n}}b^{n^{k+1}} - q^{[k]_{n}}b^{n^{k}} }\right]\\
&\qquad + \sum_{k=0}^{\infty}\left[q^{[k]_n}a^{n^k}\left(q^{n^k}
a^{n^k (n-1)}-1\right)\frac{ f\left(q^{[k+1]_{n}}a^{n^{k+1}}
\right)-f\left(q^{[k]_{n}}a^{n^{k}} \right)}
{{q^{[k+1]_{n}}a^{n^{k+1}} - q^{[k]_{n}}a^{n^{k}} }}\right]\\
&= f(b)-f\left(a\right).
\end{split}
\end{equation*}
This completes the proof.
\end{proof}

\begin{lem}
\label{ineq_lem1}
Let $s \in J\subseteq [0,\theta)$ and $g$ be $\h$-integrable on $J$.
If $0\leq|f(t)|\le g(t)$ for all $t\in \left\{q^{[k]_n}s^{n^k}: k\in\mathbb{N}_0\right\}$,
then
\begin{equation}
\label{int f le int g hahn 1}
\left|\displaystyle\int_{0}^{b} f(t)  d_{\h} t\right| \le
\displaystyle\int_{0}^{b} g(t)d_{\h} t \qquad \mbox{and
}\qquad\displaystyle\left|\int_{a}^{b} f(t) d_{\h} t\right| \le
\displaystyle\int_{a}^{b} g(t)d_{\h} t
\end{equation}
for $a,b\in\left\{q^{[k]_n}s^{n^k}: k\in\mathbb{N}_0\right\}$ with $a<b$.
Consequently, if $g(t)\ge 0$ for all $t\in \left\{q^{[k]_n}s^{n^k}: k\in\mathbb{N}_0\right\}$,
then
\begin{equation}
\label{int g ge 0 hahn 2}
\displaystyle \int_{0}^{b} g(t) d_{\h} t \ge 0\qquad \mbox{and
}\qquad\displaystyle\int_{a}^{b} g(t) d_{\h} t \ge 0
\end{equation}
for all $a,b \in \left\{q^{[k]_n}s^{n^k}: k\in\mathbb{N}_0\right\}$ such that $a<b$.
\end{lem}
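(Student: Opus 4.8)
The plan is to read off the sign of the coefficients in the defining series \eqref{int w to x}: on $[0,\theta)$ they are all nonnegative, after which both inequalities reduce to the triangle inequality for an absolutely convergent series. First I would rewrite \eqref{int w to x}. Using $[k+1]_n=[k]_n+n^{k}$ together with the function $h(t)=qt^{n}$ and its iterates $h^{k}(t)=q^{[k]_n}t^{n^{k}}$ from Lemma~\ref{Hn To w0}, the $k$-th summand of \eqref{int w to x} takes the form $\bigl(h^{k}(x)-h^{k+1}(x)\bigr)f\!\bigl(h^{k}(x)\bigr)$, so that
\[
\int_{0}^{x} f(t)\ddd t=\sum_{k=0}^{\infty}\bigl(h^{k}(x)-h^{k+1}(x)\bigr)f\!\bigl(h^{k}(x)\bigr),\qquad x\in I .
\]
Now, writing $y=h^{k}(x)$, one has $h^{k}(x)-h^{k+1}(x)=y-qy^{n}=y\bigl(1-qy^{\,n-1}\bigr)$, and for $0\le y<\theta$ both factors are nonnegative because $0\le qy^{\,n-1}<q\,\theta^{\,n-1}=1$ (when $n=1$, $\theta=\infty$ and $qy^{\,n-1}=q<1$). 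Hence, for every $x\in[0,\theta)$, all coefficients $h^{k}(x)-h^{k+1}(x)$ are $\ge 0$, the sequence $\bigl(h^{k}(x)\bigr)_{k\ge 0}$ is nonincreasing, and it stays in $[0,\theta)$.

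For the first estimate in \eqref{int f le int g hahn 1}, I would write $b=h^{l}(s)$ for the appropriate $l\in\mathbb{N}_0$. By the semigroup property $h^{k}\circ h^{l}=h^{k+l}$, which is immediate from the formula for $h^{k}$, every point $h^{k}(b)=h^{k+l}(s)$ at which $f$ is sampled in the series lies in $\{q^{[k]_n}s^{n^k}:k\in\mathbb{N}_0\}$, where $0\le|f|\le g$. Applying the triangle inequality term by term --- legitimate since $\int_{0}^{b} g(t)\ddd t$ converges, which forces absolute convergence of the $f$-series by comparison --- and then the hypothesis gives
\[
\left|\int_{0}^{b} f(t)\ddd t\right|\le\sum_{k=0}^{\infty}\bigl(h^{k}(b)-h^{k+1}(b)\bigr)\bigl|f\!\bigl(h^{k}(b)\bigr)\bigr|\le\int_{0}^{b} g(t)\ddd t .
\]

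For the estimate on $[a,b]$, write $a=h^{m}(s)$ and $b=h^{l}(s)$. Since $s>0$ may be assumed (if $s=0$ the set collapses to $\{0\}$ and everything is trivial), the orbit of $s$ is strictly decreasing, so $a<b$ forces $l<m$; subtracting the two series \eqref{int w to x} --- their tails from index $m$ onward cancel --- then yields the finite sum
\[
\int_{a}^{b} f(t)\ddd t=\sum_{i=l}^{m-1}\bigl(h^{i}(s)-h^{i+1}(s)\bigr)f\!\bigl(h^{i}(s)\bigr),
\]
with nonnegative weights and with all arguments in $\{q^{[k]_n}s^{n^k}\}$, so the triangle inequality together with $|f|\le g$ again finishes it. The two assertions in \eqref{int g ge 0 hahn 2} are then the special case $f=g\ge 0$, for which every term in the displayed sums is $\ge 0$.

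I expect the difficulty to be bookkeeping rather than analysis: carrying out the rewriting of \eqref{int w to x} and the sign computation cleanly in terms of the exponents $[k]_n$ and $n^{k}$, and --- above all --- checking that every sample point occurring in $\int_{0}^{b}$ and in $\int_{a}^{b}$ stays inside the discrete set on which $|f|\le g$ is assumed, which is exactly where the semigroup law for the iterates of $h$ (Lemma~\ref{Hn To w0}) and the monotonicity of the orbit are used. Beyond the standing assumption that the series involved converge, no further analytic input is required.
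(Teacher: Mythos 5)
Your proposal is correct and follows essentially the same route as the paper's proof: you verify that the weights $h^{k}(x)-h^{k+1}(x)=-q^{[k]_n}x^{n^k}\bigl(q^{n^k}x^{n^k(n-1)}-1\bigr)$ are nonnegative on $[0,\theta)$, use the semigroup law for the iterates of $h$ to keep the sample points inside $\{q^{[k]_n}s^{n^k}\}$, apply the triangle inequality to the absolutely convergent series for $\int_0^b$, and reduce $\int_a^b$ to a finite telescoped sum. The only cosmetic difference is that the paper deduces \eqref{int g ge 0 hahn 2} by setting $f=0$ rather than $f=g$; both are valid.
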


\begin{proof}
If $b\in \left\{q^{[k]_n}s^{n^k}: k\in\mathbb{N}_0\right\}$,
then we can write $b=q^{[k_2]_n}s^{n^{k_2}}$ for some $k_2\in \N_{0}$.
Observe that, for all $k\in \mathbb{N}_0$,
$$
q^{[k]_n} b^{n^k}=q^{[k+k_2]_n}s^{n^{k+k_2}}
\in \left\{q^{[k]_n}s^{n^k}: k\in\mathbb{N}_0\right\}.
$$
Therefore, by assumption, we have $0\leq|f(q^{[k]_n} b^{n^k})|\le g(q^{[k]_n} b^{n^k})$
and $-q^{[k]_n}b^{n^k}\left(q^{n^k} b^{n^k (n-1)}-1\right)>0$ for all $k\in \N_{0}$.
Since $g$ is $\h$-integrable on $J$, it follows that the series
$$
\sum_{k=0}^{\infty} -q^{[k]_n}b^{n^k}\left(q^{n^k} b^{n^k
(n-1)}-1\right) f\left(q^{[k]_n}b^{n^k}\right)
$$
is absolutely convergent. Therefore,
\begin{equation*}
\begin{split}
\left|\int_{0}^{b} f(t)d_{\h} t\right|&= \left|
\sum_{k=0}^{\infty} -q^{[k]_n}b^{n^k}\left(q^{n^k} b^{n^k (n-1)}-1\right)
f\left(q^{[k]_n}b^{n^k}\right)\right| \\
&\le \sum_{k=0}^{\infty}-q^{[k]_n}b^{n^k}\left(q^{n^k} b^{n^k (n-1)}-1\right)
\left|f\left(q^{[k]_n}b^{n^k}\right)\right| \\
&\le \sum_{k=0}^{\infty}
-q^{[k]_n}b^{n^k}\left(q^{n^k} b^{n^k (n-1)}-1\right) g\left(q^{[k]_n}b^{n^k}\right)\\
&= \int_{0}^{b} g(t)d_{\h} t.
\end{split}
\end{equation*}
Now, if $a,b\in \left\{q^{[k]_n}s^{n^k}: k\in\mathbb{N}_0\right\}$
and $a<b$, then we can write $a=q^{[k_1]_n}s^{n^{k_1}}$ and
$b=q^{[k_2]_n}s^{n^{k_2}}$ for some $k_1,k_2\in \N$, $k_1>k_2$.
Hence,
\begin{equation*}
\begin{split}
\Biggl|\int_{a}^{b} & f(t)d_{\h} t\Biggr| \\
&= \left|-\sum_{k=0}^{\infty}q^{[k+k_2]_n}s^{n^{k+k_2}}\left(q^{n^{k+k_2}}
s^{n^{k+k_2} (n-1)}-1\right)f\left(q^{[k+k_2]_n}s^{n^{k+k_2}}\right)\right.\\
&\quad\left.+\sum_{k=0}^{\infty}q^{[k+k_1]_n}s^{n^{k+k_1}}\left(q^{n^{k+k_1}}
s^{n^{k+k_1} (n-1)}-1\right)f\left(q^{[k+k_1]_n}s^{n^{k+k_1}}\right)\right| \\
&= \left|\sum_{k=k_2}^{\infty}q^{[k]_n}s^{n^{k}}\left(1-q^{n^{k}}
s^{n^{k} (n-1)}\right)f\left(q^{[k]_n}s^{n^{k}}\right)\right.\\
&\quad-\left.\sum_{k=k_1}^{\infty}q^{[k]_n}s^{n^{k}}\left(1-q^{n^{k}}
s^{n^{k} (n-1)}\right)f\left(q^{[k]_n}s^{n^{k}}\right)\right|\\
&\le \sum_{k=k_2}^{k_1-1}q^{[k]_n}s^{n^{k}}\left(1-q^{n^{k}}
s^{n^{k} (n-1)}\right)\left| f\left(q^{[k]_n}s^{n^{k}}\right)\right| \\
&\le \sum_{k=k_2}^{k_1-1}q^{[k]_n}s^{n^{k}}\left(1-q^{n^{k}}
s^{n^{k} (n-1)}\right) g\left(q^{[k]_n}s^{n^{k}}\right)\\
& \qquad \mp \sum_{k=k_1}^{\infty}q^{[k]_n}s^{n^{k}}\left(1-q^{n^{k}}
s^{n^{k} (n-1)}\right) g\left(q^{[k]_n}s^{n^{k}}\right)\\
&= \sum_{k=k_2}^{\infty}q^{[k]_n}s^{n^{k}}\left(1-q^{n^{k}}
s^{n^{k} (n-1)}\right) g\left(q^{[k]_n}s^{n^{k}}\right)\\
& \qquad - \sum_{k=k_1}^{\infty}q^{[k]_n}s^{n^{k}}\left(1-q^{n^{k}}
s^{n^{k} (n-1)}\right) g\left(q^{[k]_n}s^{n^{k}}\right)
\end{split}
\end{equation*}
\begin{equation*}
\begin{split}
&= - \sum_{k=0}^{\infty}q^{[k+k_2]_n}s^{n^{k+k_2}}\left(q^{n^{k+k_2}}
s^{n^{k+k_2} (n-1)}-1\right) g\left(q^{[k+k_2]_n}s^{n^{k+k_2}}\right)\\
&\quad + \sum_{k=0}^{\infty}q^{[k+k_1]_n}s^{n^{k+k_1}}\left(q^{n^{k+k_1}}
s^{n^{k+k_1} (n-1)}-1\right) g\left(q^{[k+k_1]_n}s^{n^{k+k_1}}\right)\\
&= \int_{a}^{b} g(t)d_{\h} t.
\end{split}
\end{equation*}
To show that \eqref{int g ge 0 hahn 2} is true,
we just put $f=0$ in \eqref{int f le int g hahn 1}.
\end{proof}

\begin{lem}
\label{ineq_lem2}
Let $s \in J\subseteq (-\theta,0]$ and $g$ be $\h$-integrable on $J$.
If $0\leq|f(t)|\le g(t)$ for all
$t\in \left\{q^{[k]_n}s^{n^k}: k\in\mathbb{N}_0\right\}$, then
\begin{equation*}
\left|\displaystyle\int_{b}^{0} f(t)  d_{\h} t\right| \le
\displaystyle\int_{b}^{0} g(t)d_{\h} t \qquad \mbox{and
}\qquad\displaystyle\left|\int_{a}^{b} f(t) d_{\h} t\right| \le
\displaystyle\int_{a}^{b} g(t)d_{\h} t
\end{equation*}
for $a,b\in\left\{q^{[k]_n}s^{n^k}: k\in\mathbb{N}_0\right\}$
such that $a<b$. Consequently, if $g(t)\ge 0$ for all
$t\in \left\{q^{[k]_n}s^{n^k}: k\in\mathbb{N}_0\right\}$, then
\begin{equation*}
\displaystyle \int_{b}^{0} g(t) d_{\h} t \ge 0\qquad \mbox{and
}\qquad\displaystyle  \int_{a}^{b} g(t) d_{\h} t \ge 0
\end{equation*}
for all $a,b \in \left\{q^{[k]_n}s^{n^k}:
k\in\mathbb{N}_0\right\}$ such that $a<b$.
\end{lem}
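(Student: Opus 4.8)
The plan is to reduce Lemma~\ref{ineq_lem2} to Lemma~\ref{ineq_lem1} via the reflection $\sigma(t):=-t$, exploiting that $n$ is odd. Indeed, then the map $h(t)=qt^n$ from Lemma~\ref{Hn To w0} satisfies $h(-t)=q(-t)^n=-qt^n=-h(t)$, so $h$ (and hence each iterate $h^k$) is odd; in particular, if $s\le 0$ and $t\in\{q^{[k]_n}s^{n^k}:k\in\mathbb{N}_0\}$, then $-t\in\{q^{[k]_n}(-s)^{n^k}:k\in\mathbb{N}_0\}$, since $n^k$ is odd. Thus $\sigma$ maps the sampling set appearing in Lemma~\ref{ineq_lem2} bijectively onto the one appearing in Lemma~\ref{ineq_lem1}.

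First I would establish a reflection identity for the $\h$-integral. Writing $\tilde f:=f\circ\sigma$, a direct rearrangement of the defining series~\eqref{int w to x}, using $(-x)^{n^k}=-x^{n^k}$ (since $n^k$ is odd) and $(-x)^{n^k(n-1)}=x^{n^k(n-1)}$ (since $n-1$ is even), gives
\[
\int_0^{-x} f(t)\,\ddd t=-\int_0^{x}\tilde f(t)\,\ddd t ,
\]
valid whenever the series involved converge. Combining this with~\eqref{int a to b} yields $\int_a^b f(t)\,\ddd t=\int_{-b}^{-a}\tilde f(t)\,\ddd t$, and the same two identities hold with $f$ replaced by $g$.

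Next I would set $\tilde s:=-s$, $\tilde J:=-J\subseteq[0,\theta)$, and $\tilde g:=g\circ\sigma$, and check that $\tilde f,\tilde g$ satisfy the hypotheses of Lemma~\ref{ineq_lem1} on $\tilde J$: the $\h$-integrability of $\tilde g$ on $\tilde J$ follows from that of $g$ on $J$ through the reflection identity, and $0\le|\tilde f(t)|\le\tilde g(t)$ on $\{q^{[k]_n}\tilde s^{n^k}:k\in\mathbb{N}_0\}$ is exactly the hypothesis $0\le|f(-t)|\le g(-t)$ on $\{q^{[k]_n}s^{n^k}:k\in\mathbb{N}_0\}$ rewritten via the bijection above. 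Lemma~\ref{ineq_lem1} then gives the inequalities~\eqref{int f le int g hahn 1}--\eqref{int g ge 0 hahn 2} for $\tilde f,\tilde g$; transporting these back through the reflection identity (noting that $a<b\le0$ corresponds to $0\le-b<-a$, so the order of the limits of integration is reversed) produces precisely the four asserted inequalities, and the last assertion of the lemma follows, as there, by putting $f\equiv0$.

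The only real difficulty is careful sign- and index-bookkeeping when pushing $\sigma$ through~\eqref{int a to b}. Alternatively, one could repeat the proof of Lemma~\ref{ineq_lem1} line by line; in that case the single point needing attention is that, for $b\in(-\theta,0]$, the weight $q^{[k]_n}b^{n^k}\bigl(q^{n^k}b^{n^k(n-1)}-1\bigr)$ is still $\ge0$ --- this holds because $n-1$ is even, so $b^{n^k(n-1)}\ge0$, while $|b|<\theta=q^{1/(1-n)}$ forces $q^{n^k}|b|^{n^k(n-1)}<1$ --- after which the triangle-inequality estimates carry over, with the roles of the indices $k_1,k_2$ interchanged.
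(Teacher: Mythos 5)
Your proposal is correct, and its primary route differs from the paper's. The paper does not use a reflection: it simply reruns the computation of Lemma~\ref{ineq_lem1} for $b\in(-\theta,0]$, writing $\bigl|\int_b^0 f\bigr|=\bigl|\int_0^b f\bigr|$, applying the triangle inequality to the defining series, and using that for such $b$ the coefficient $-q^{[k]_n}b^{n^k}\bigl(q^{n^k}b^{n^k(n-1)}-1\bigr)$ is now nonpositive, so that its absolute value is $+q^{[k]_n}b^{n^k}\bigl(q^{n^k}b^{n^k(n-1)}-1\bigr)$ and the bound becomes $-\int_0^b g\,d_{\h}t=\int_b^0 g\,d_{\h}t$; the remaining inequalities are left ``as in Lemma~\ref{ineq_lem1}''. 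That is exactly your second, fallback argument, including the one sign observation you single out (which is the whole content of the paper's proof). Your main argument --- conjugating by $\sigma(t)=-t$, verifying the series identity $\int_0^{-x}f\,\ddd t=-\int_0^x (f\circ\sigma)\,\ddd t$ from oddness of $n^k$ and evenness of $n^k(n-1)$, and then invoking Lemma~\ref{ineq_lem1} for $\tilde f=f\circ\sigma$, $\tilde g=g\circ\sigma$ on $-J\subseteq[0,\theta)$ --- is a genuine reduction rather than a repetition: it buys you all four inequalities of Lemma~\ref{ineq_lem2} at once from the already-proved lemma, at the cost of the sign/orientation bookkeeping you acknowledge (in particular that $a<b\le 0$ becomes $0\le -b<-a$, reversing the limits). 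Both arguments are sound; I find no gap in either.
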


\begin{proof}
Arguing as in the proof of Lemma~\ref{ineq_lem1},
we can show that the series
$$
\sum_{k=0}^{\infty} -q^{[k]_n}b^{n^k}\left(q^{n^k} b^{n^k
(n-1)}-1\right) f\left(q^{[k]_n}b^{n^k}\right)
$$
is absolutely convergent. Therefore, we have
\begin{eqnarray*}
\left|\displaystyle \int_{b}^{0} f(t)d_{\h} t\right|
&=&  \left|\displaystyle \int_{0}^{b} f(t)d_{\h} t\right|\\
&=& \left| \displaystyle -\sum_{k=0}^{\infty}
q^{[k]_n}b^{n^k}\left(q^{n^k} b^{n^k (n-1)}-1\right)
f\left(q^{[k]_n}b^{n^k}\right)\right| \\
&\le& \displaystyle \sum_{k=0}^{\infty}\left|
-q^{[k]_n}b^{n^k}\left(q^{n^k} b^{n^k (n-1)}-1\right)\right|
\left|f\left(q^{[k]_n}b^{n^k}\right)\right| \\
&\le&   \displaystyle \sum_{k=0}^{\infty} q^{[k]_n}b^{n^k}\left(q^{n^k}
b^{n^k (n-1)}-1\right) g\left(q^{[k]_n}b^{n^k}\right)\\
&=&  -\displaystyle \int_{0}^{b} g(t)d_{\h} t
=\displaystyle \int_{b}^{0} g(t)d_{\h} t.
\end{eqnarray*}
The rest of the proof can be done similarly to the proof of
Lemma~\ref{ineq_lem1}.
\end{proof}

It should be noted that the inequality
\begin{equation*}
\left|\int_a^b f(t)d_{\h} t \right|\le \int_a^b| f(t)| d_{\h} t
\quad \mbox{for all} \;a,b\in I
\end{equation*}
is not always true. For example, fix $n=1$, $0<q<1$, and
define the function $f : [0,1]\To\R$ by
\begin{equation*}
f(x)=\left\{
\begin{array}{cl}
\frac{1}{1-q}\left(4q^{-n}x-(1+3q)\right),& q^{n+1}\le
x\le\frac{q^n(1+q)}{2},\,n\in\N,\\
\frac{4}{1-q}\left(-xq^{-n}+1\right)-1,&\frac{q^{n}(1+q)}{2}\leqslant
x\leqslant q^n,\,n\in\N,\\
0,&x=0,
\end{array}\right.
\end{equation*}
(see \cite{AAIM1}). It follows that $f$ is $\h$-integrable on $[0,1]$,
$f(q^n)=-1$, and $\displaystyle f\left(\frac{1+q}{2}q^n\right)=1$ for
all $n\in\N$. By a direct calculation one can see that
\[
\int_{\frac{1+q}{2}}^{1} f(t)\, d_{\h}t
=-\frac{3+q}{2}\quad
\text{and}\quad \int_{\frac{1+q}{2}}^{1}|f(t)|\, d_{\h}t
=\frac{1-q}{2}.
\]
Thus,
\[
\left|\int_{\frac{1+q}{2}}^{1}f(t)\,d_{\h}t\right|
>\int_{\frac{1+q}{2}}^{1}|f(t)|\,d_{\h}t.
\]

\begin{lem}
Let $f,g :I \To \R$.
\begin{itemize}
\item[(i)] If functions $f$ and $g$ are $\h$-differentiable,
then the following integration by parts formula holds:
\begin{equation}
\label{integral by part}
\int_{a}^{b}f(t)\dd g(t)\ddd t
= \left.f(t)g(t)\right|_{a}^{b}
- \int_{a}^{b}\dd f (t) g(\fft)\ddd t,\;\;\;a,b \in I.
\end{equation}
\item[(ii)] If $f$ is continuous at $0$, then for $t \in I$
\[
\int_{t}^{\fft} f(r) \ddd r = (qt^n- t)  f(t) .
\]
\end{itemize}
\end{lem}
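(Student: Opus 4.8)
For part~(i) the plan is to obtain the formula by integrating the product rule of Theorem~\ref{forward sum-con-multi-divi}(iii). Writing that rule in its second form,
\[
\dd (fg)(t) = f(t)\,\dd g(t) + \dd f(t)\, g(\fft),
\]
and solving for the first summand on the right gives $f(t)\,\dd g(t) = \dd (fg)(t) - \dd f(t)\, g(\fft)$. Integrating this identity from $a$ to $b$, using linearity of the $\h$-integral (Lemma~\ref{forwad integral props}(ii),(v)), and applying the Newton--Leibniz formula $\int_a^b \dd(fg)(t)\ddd t = f(b)g(b) - f(a)g(a)$ --- which is Theorem~\ref{main_calc} applied to the function $fg$ --- yields \eqref{integral by part}. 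That application of Theorem~\ref{main_calc} is justified because $f$ and $g$, being $\h$-differentiable, are continuous at $0\in S$ by part~(i) of the lemma following Definition~\ref{def:qp:diff}, so that $fg$ is continuous at $0$ as well; the remaining point, that $t\mapsto \dd f(t)\, g(\fft)$ be $\h$-integrable on the interval considered, is covered by the standing assumption that all series appearing in the text converge.

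For part~(ii) I would set $F(t):=\int_0^t f(r)\ddd r$ for $t\in I$. Since $f$ is continuous at $0$, Theorem~\ref{Anti derivative} guarantees that $F$ is a $\h$-antiderivative of $f$ on $I$; in particular $F$ is $\h$-differentiable at each $t\in I$ and $\dd F(t)=f(t)$ there. Applying part~(iii) of the lemma following Definition~\ref{def:qp:diff} to $F$ in place of $f$ gives
\[
F(\fft) = F(t) + (qt^n - t)\,\dd F(t) = F(t) + (qt^n - t)\,f(t).
\]
On the other hand, the defining relation \eqref{int a to b} of the $\h$-integral (together with $\fft\in I$ for $t\in I$, which is immediate from Lemma~\ref{Hn To w0}) gives
\[
\int_{t}^{\fft} f(r)\ddd r = \int_{0}^{\fft} f(r)\ddd r - \int_{0}^{t} f(r)\ddd r = F(\fft) - F(t),
\]
and combining the two displays yields $\int_{t}^{\fft} f(r)\ddd r = (qt^n - t)\,f(t)$. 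An alternative for $t\notin S$ is to expand \eqref{int w to x} directly and observe that the resulting series telescopes; the single point $t=0\in I\cap S$ is trivial, since there $\fft=0$ and both sides vanish.

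I do not expect any genuine computational difficulty: the substance is already packaged in Theorems~\ref{forward sum-con-multi-divi}, \ref{main_calc} and~\ref{Anti derivative}, and the proof amounts to assembling them. The one thing that requires attention is the bookkeeping around the exceptional set $S$ and the integrability/convergence hypotheses --- for part~(i), checking that $fg$ is continuous at $0$ (so that Theorem~\ref{main_calc} applies) and that the integration-by-parts partner $\dd f\cdot(g\circ h)$ is $\h$-integrable, and for part~(ii), ensuring that $qt^n$ remains in $I$ so that both one-sided integrals are well defined.
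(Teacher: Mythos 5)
Your part~(i) is exactly the paper's argument: integrate the product rule $\dd(fg)(t)=f(t)\dd g(t)+\dd f(t)g(\fft)$ over $[a,b]$, use linearity of the integral, and evaluate $\int_a^b\dd(fg)(t)\ddd t$ by Theorem~\ref{main_calc}; your extra remark that $fg$ is continuous at $0$ (so that Theorem~\ref{main_calc} applies) makes explicit a hypothesis the paper leaves tacit. For part~(ii) you take a genuinely different, and in my view cleaner, route: the paper expands $\int_0^{\fft}f-\int_0^{t}f$ as two series in the variable $t$ and observes that all terms except the $k=0$ term of the first sum cancel, yielding $(qt^n-t)f(t)$ by telescoping; you instead set $F(t)=\int_0^t f(r)\ddd r$, invoke Theorem~\ref{Anti derivative} to get $\dd F=f$ on $I$, and then read the claim off the elementary identity $F(\fft)=F(t)+(qt^n-t)\dd F(t)$ from part~(iii) of the lemma following Definition~\ref{def:qp:diff}. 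What your version buys is that no series manipulation is repeated --- the telescoping is already packaged inside the proof of Theorem~\ref{Anti derivative} --- and the role of the continuity hypothesis at $0$ becomes transparent (it is exactly what Theorem~\ref{Anti derivative} needs); what the paper's direct computation buys is self-containedness at the single point $t$ considered. Both are correct, and you even flag the paper's telescoping computation as your fallback alternative.
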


\begin{proof}
\begin{itemize}
\item[(i)]  By Theorem~\ref{main_calc} we have
$$
\int_{a}^{b} D_{n,q} (fg)(t) d_{n,q} t
=(fg)(b)-(fg)(a).
$$
On the other hand, by (iii) of Theorem~\ref{forward sum-con-multi-divi}
and (v) of Theorem~\ref{forwad integral props},
$$
\int_{a}^{b}\dd (fg) (t)= \int_{a}^{b}f(t) \dd g (t)d_{n,q} t
+ \int_{a}^{b}\dd f (t) g (\fft)d_{n,q} t.
$$
Combining these two equalities we get the desired formula.
\item[(ii)]
\begin{equation*}
\begin{split}
\int_{t}^{\fft} & f(s) \ddd s
= \int_{0}^{\fft} f(s) \ddd s
- \displaystyle\int_{0}^{t} f(s) \ddd s\\
&= \sum_{k=0}^{\infty}
\Biggl[ q^{[k]_n}t^{n^k}\left(q^{n^k} t^{n^k (n-1)}-1\right)
f\left(q^{[k]_n}t^{n^k}\right)\\
& \;\;\;\;\;\;\;\;\qquad -  q^{[k+1]_n}t^{n^{k+1}}\left(q^{n^{k+1}}
t^{n^{k+1} (n-1)}-1\right) f\left(q^{[k+1]_n}t^{n^{k+1}}\right)\Biggr]\\
&=  (qt^n-t)f(t).
\end{split}
\end{equation*}
\end{itemize}
\end{proof}


\section{The power quantum variational calculus}
\label{sec:appl}

The calculus of variations is a classical subject of mathematics
with many applications in physics, economics, biology, and engineering
\cite{Leizarowitz1985,Leizarowitz1989,Weinstock}. Although an old theory,
is very much alive and still evolving --- see, \textrm{e.g.},
\cite{MyID:182,Ric:Del:09,Leizarowitz2003,NataliaHigherOrderNabla}.

Several quantum variational problems have been recently posed and studied
\cite{Ric:Del,Bang04,Bang05,Cresson,Jacky:Gastao:Delfim}.
Here we give one application of the power quantum calculus
which we derived in Section~\ref{sec:MR}, introducing
the power quantum variational calculus and
proving a quantum analog of the Euler--Lagrange equation
(Section~\ref{ssec:EL}). This provides a necessary optimality
condition for local minimizers. Direct methods can also
be developed for our power quantum variational calculus,
allowing to obtain global minimizers
for certain classes of problems (Section~\ref{Leitmann}).

As in Section~\ref{subsec:int}, we define the interval $I$
to be $(-\w,\w).$ Let $a,b\in I$ with $a< b$. We define the $\h$-interval by
$$
[a,b]_{\h}:=\left\{\ffka: k\in\mathbb{N}_{0}\right\}\cup\left\{\ffkb:
k\in\mathbb{N}_{0}\right\}\cup\{0\}.
$$
Let $\mathcal{D}([a,b]_{\h},\mathbb{R})$
be the set of all real valued
functions continuous and bounded on $[a,b]_{\h}$.

\begin{lem}[Fundamental Lemma of the power quantum variational calculus]
\label{lemma:DR} Let $f\in \mathcal{D}([a,b]_{\h},\mathbb{R})$. One
has $\int_{a}^{b}f(t)g(\fft)\ddd t=0$ for all functions $g\in
\mathcal{D}([a,b]_{\h},\mathbb{R})$ with $g(a)=g(b)=0$ if and only
if $f(t)=0$ for all $t\in[a,b]_{\h}$.
\end{lem}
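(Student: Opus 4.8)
The statement is a standard "fundamental lemma of the calculus of variations", so I would follow the classical template, adapted to the power quantum setting. The easy direction is clear: if $f(t)=0$ for all $t\in[a,b]_{\h}$, then in the series defining $\int_a^b f(t)g(\fft)\ddd t$ every term vanishes (all the points $\ffka$ and $\ffkb$ at which $f$ is sampled lie in $[a,b]_{\h}$), hence the integral is $0$ for every admissible $g$. So the substance is the contrapositive of the "only if" direction: assuming there is a point $t^\ast\in[a,b]_{\h}$ with $f(t^\ast)\ne 0$, I would construct an admissible $g$ (continuous and bounded on $[a,b]_{\h}$, vanishing at the endpoints $a$ and $b$) for which $\int_a^b f(t)g(\fft)\ddd t\ne 0$.

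First I would split into cases according to the sign of $t^\ast$, since $[a,b]_{\h}$ is a union of a "positive" orbit $\{\ffkb\}$, a "negative" orbit $\{\ffka\}$ (assuming $a<0<b$; the degenerate sign cases $0\le a$ or $b\le 0$ are handled by taking only one orbit), and the point $0$. Suppose $t^\ast=\fft[k_0]{b}$ lies on the positive orbit with, say, $f(t^\ast)>0$. The natural test function is a "bump" supported on a single point of the orbit: I would set $g$ so that $g(\fft[k_0+1]{b})$ equals a chosen positive value and $g$ is zero at every other point of $[a,b]_{\h}$ — this is legitimate because $[a,b]_{\h}$ is a discrete set (its only accumulation point is $0$) and any function on it that is eventually $0$ near $0$ is automatically continuous and bounded there; one must only check $g(a)=g(b)=0$, which holds since $\fft[k_0+1]{b}\notin\{a,b\}$. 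Recalling that $\int_0^x f(t)\ddd t=-\sum_{k\ge 0} q^{[k]_n}x^{n^k}(q^{n^k}x^{n^k(n-1)}-1)f(q^{[k]_n}x^{n^k})$ and that $g$ is evaluated at $\fft=qt^n$, the integral $\int_a^b f(t)g(\fft)\ddd t$ collapses to a single surviving term, of the form
\[
-\,q^{[k_0]_n}b^{n^{k_0}}\!\left(q^{n^{k_0}}b^{n^{k_0}(n-1)}-1\right) f\!\left(q^{[k_0]_n}b^{n^{k_0}}\right) g\!\left(q^{[k_0+1]_n}b^{n^{k_0+1}}\right),
\]
and since the coefficient $-q^{[k]_n}b^{n^k}(q^{n^k}b^{n^k(n-1)}-1)>0$ for $b\in(0,\w)$ (the same positivity used in the proof of Lemma~\ref{ineq_lem1}), this term is nonzero, a contradiction. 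The case $f(t^\ast)<0$ is symmetric (flip the sign of $g$), and the negative-orbit case is identical using $a\in(-\w,0)$.

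The one genuinely separate case is $t^\ast=0$, i.e. $f(0)\ne 0$: here no single off-zero bump works directly, so instead I would use continuity of $f$ only implicitly — rather, since $0$ is the lone accumulation point, I would choose $g$ supported on the tails, e.g. $g(\fft[k+1]{b})$ a suitable constant for all $k\ge K$ and $0$ elsewhere, and show $\int_a^b f(t)g(\fft)\ddd t$ is, up to a positive factor, a telescoping-type tail sum that reduces to a nonzero multiple of $f(0)$ (this mirrors the computation $\int_t^{\fft} f(r)\ddd r=(qt^n-t)f(t)$ and the $t=0$ limit argument in the proof of Theorem~\ref{Anti derivative}). I expect the bookkeeping in this $0$-case — tracking which terms survive and checking the tail sum does not accidentally cancel — to be the main obstacle; everything else is the routine "indicator bump" argument. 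Finally I would remark that boundedness of $g$ is automatic for these finitely-or-eventually-constant test functions, so $g\in\mathcal{D}([a,b]_{\h},\mathbb{R})$ as required.
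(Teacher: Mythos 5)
Your easy direction and your treatment of the case $t^\ast\neq 0$ coincide with the paper's proof: a single-point bump $g$ supported at $q(t^\ast)^n$, with value $f(t^\ast)$, kills all but one term of the defining series and leaves $\pm\, q^{[k]_n}a^{n^k}\left(q^{n^k}a^{n^k(n-1)}-1\right)\left(f(q^{[k]_n}a^{n^k})\right)^2\neq 0$. The genuine problem is your case $t^\ast=0$. The test function you propose there --- $g$ equal to a nonzero constant at $q^{[k+1]_n}b^{n^{k+1}}$ for all $k\geq K$ and $0$ elsewhere --- is not in $\mathcal{D}([a,b]_{\h},\mathbb{R})$: by Lemma~\ref{Hn To w0} the points $q^{[k]_n}b^{n^{k}}$ accumulate at $0$, which is the unique accumulation point of $[a,b]_{\h}$, so this $g$ is discontinuous precisely where continuity is a nonvacuous requirement. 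Nor can you rescue it by setting $g(0)$ equal to the constant: when $a=0$ (the subcase where you are forced onto the $b$-orbit) that violates $g(a)=0$, and when $a\neq 0$ the $a$-orbit, on which $g$ vanishes, also accumulates at $0$ and forces $g(0)=0$.

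The paper sidesteps the tail sum entirely. Since $f$ is continuous at $0$ and $\lim_{k}q^{[k]_n}a^{n^k}=\lim_{k}q^{[k]_n}b^{n^k}=0$, the values $f\left(q^{[k]_n}a^{n^k}\right)$ and $f\left(q^{[k]_n}b^{n^k}\right)$ carry the sign of $f(0)$ for all large $k$; one fixes one such orbit point and reuses the single-point bump of the first case verbatim. Your argument is repairable in the same spirit, or by truncating your tail to a finite window $K\leq k\leq K'$ (which restores continuity of $g$ at $0$ and yields a finite sum of terms of a single sign), but as written the $t^\ast=0$ case does not go through. Note also that you would still need continuity of $f$ at $0$ explicitly, not ``implicitly,'' to prevent sign cancellation over the window.
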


\begin{proof}
The implication ``$\Leftarrow$'' is obvious. Let us prove the
implication ``$\Rightarrow$''. Suppose, by contradiction, that
$f(c)\neq 0$ for some $c\in[a,b]_{\h}$.\\
\emph{Case I}. If $c\neq 0$, then  without loss of generality we can
assume that $c=\ffka$ for $k\in\mathbb{N}_{0}$. Define
$$g(t)= \left\{
\begin{array}{lcl}
f\left(\ffka\right)  & & \text{if } t=q^{[k+1]_n}a^{n^{k+1}}\\
0 & & \mbox{otherwise}\, .
\end{array} \right.
$$
Since $a\neq 0$, we see that
$$
\int_{a}^{b}f(t)g(\fft)\ddd t= \ffka\left(q^{n^k}a^{n^k(n-1)}
-1\right)\left(f\left(\ffka\right)\right)^2\neq 0,
$$
which is a contradiction.\\
\emph{Case II}. If $c=0$, then without loss of generality we can
assume that $f(0)>0$. We know that (see Lemma~\ref{Hn To w0})
$$\lim_{k\rightarrow\infty}\ffka=\lim_{k\rightarrow\infty}\ffkb=0.$$
As $f$ is continuous at $0$,
$$
\lim_{k\rightarrow\infty}f\left(\ffka\right)
=\lim_{k\rightarrow\infty}f\left(\ffkb\right)=f(0).
$$
Therefore, there exists $N\in\mathbb{N}$
such that for all $l>N$ the inequalities
$$
f\left(\ffla\right)>0,\quad f\left(\fflb\right)>0,
$$
hold. Let us fix $k>N$. If $a\neq 0$, then we define
$$
g(t) = \left\{
\begin{array}{lcl}
f\left(\ffka\right)  & & \text{if } t=q^{[k+1]_n}a^{n^{k+1}}\\
0 & & \mbox{otherwise}\, .
\end{array} \right.
$$
Since $a\neq 0$, we see that
$$
\int_{a}^{b}f(t)g(\fft)\ddd t
=\ffka\left(q^{n^k}a^{n^k(n-1)}-1\right)\left(f\left(\ffka\right)\right)^2
\neq 0,
$$
which is a contradiction. If $a=0$, then we define
$$
g(t)= \left\{
\begin{array}{lcl}
f\left(\ffkb\right)  & & \text{if } t=q^{[k+1]_n}b^{n^{k+1}}\\
0 & & \mbox{otherwise}\, .
\end{array} \right.
$$
Since $b\neq 0$, we obtain
\begin{equation*}
\begin{split}
\int_{a}^{b}f(t)g(\fft)\ddd t
&=\int_{0}^{b}f(t)g(\fft)\ddd t\\
&=-\ffkb\left(q^{n^k}b^{n^k(n-1)}-1\right)\left(f\left(\ffkb\right)\right)^2
\neq 0,
\end{split}
\end{equation*}
which is a contradiction.
\end{proof}

Let $\mathbb{E}([a,b]_{\h},\mathbb{R})$ be the linear space of
functions $y\in \mathcal{D}([a,b]_{\h},\mathbb{R})$ for which the
$\h$-derivative is continuous and bounded on $[a,b]_{\h}$. We equip
$\mathbb{E}([a,b]_{\h},\mathbb{R})$ with the norm
\begin{equation*}
   \|y\|_{1}=\sup_{t\in [a,b]_{\h}}|y(t)|+\sup_{t\in [a,b]_{\h}}|\dd y(t)|.
\end{equation*}

The following definition and lemma are similar
to those of \cite{MalinowskaTorres}.

\begin{defn}
Let
$g:[s]_{\h}\times(-\bar{\epsilon},\bar{\epsilon})
\rightarrow\mathbb{R}$, where
$$
[s]_{\h}:=\left\{\ffks: k\in\mathbb{N}_{0}\right\}.
$$
We say that $g(t,\cdot)$ is continuous in $\epsilon_{0}$, uniformly in
$t$, if and only if for every $\varepsilon>0$ there exists $\delta>0$ such that
$|\epsilon-\epsilon_{0}|<\delta$ implies
$|g(t,\epsilon)-g(t,\epsilon_{0})|<\varepsilon$ for all $t\in[s]_{\h}$.
Furthermore, we say that $g(t,\cdot)$ is differentiable at
$\epsilon_{0}$, uniformly in $t$, if and only if for every $\varepsilon>0$ there
exists $\delta>0$ such that $0<|\epsilon-\epsilon_{0}|<\delta$ implies
$$
\left|\frac{g(t,\epsilon)-g(t,\epsilon_{0})}{\epsilon
-\epsilon_{0}}-\partial_2g(t,\epsilon_{0})\right|<\varepsilon,
$$
where $\partial_2g=\frac{\partial g}{\partial \epsilon}$ for all
$t\in[s]_{\h}$.
\end{defn}

\begin{lem}
\label{fun} Let $s \in I$. Assume $g(t,\cdot)$ is differentiable at
$\epsilon_0$, uniformly in $t$ in $[s]_{\h}$, and that
$G(\epsilon):=\int_{0}^sg(t,\epsilon)d_{q,\omega}t$, for $\epsilon$ near
$\epsilon_0$, and $\int_{0}^s\partial_2g(t,\epsilon_0)d_{q,\omega}$
exist. Then, $G(\epsilon)$ is differentiable at $\epsilon_0$ with
$G'(\epsilon_0)=\int_{0}^s\partial_2g(t,\epsilon_0)\ddd t$.
\end{lem}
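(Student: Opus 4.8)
The plan is to reduce the statement to a manipulation of the explicit series \eqref{int w to x} that defines $G$. Put $t_k:=\ffks$ and
\[
c_k:=q^{[k]_n}s^{n^k}\left(q^{n^k}s^{n^k(n-1)}-1\right),\qquad k\in\mathbb{N}_0,
\]
so that, by \eqref{int w to x}, $G(\epsilon)=-\sum_{k=0}^{\infty}c_k\,g(t_k,\epsilon)$ and, by hypothesis, $\int_{0}^{s}\partial_2g(t,\epsilon_0)\ddd t=-\sum_{k=0}^{\infty}c_k\,\partial_2g(t_k,\epsilon_0)$, both series being convergent. The first thing I would record is that $M:=\sum_{k=0}^{\infty}|c_k|<\infty$. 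This follows from Lemma~\ref{Hn To w0} together with the estimates used in the proof of Theorem~\ref{main_calc}: since $s\in I=(-\w,\w)$ we have $q^{n^k}s^{n^k(n-1)}=(q\,s^{n-1})^{n^k}\in[0,1)$ for every $k$, so $|c_k|\le q^{[k]_n}|s|^{n^k}=\w\,(|s|/\w)^{n^k}$, and the right-hand side is summable because $|s|/\w<1$ and $n^k\ge k$. (In fact the $c_k$ all have the same sign, so by Theorem~\ref{main_calc} applied to the constant function $1$ one gets the sharp value $M=\bigl|\int_{0}^{s}1\ddd t\bigr|=|s|$; only finiteness is needed below.)

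Next, for $\epsilon\neq\epsilon_0$ near $\epsilon_0$, set
\[
R_k(\epsilon):=\frac{g(t_k,\epsilon)-g(t_k,\epsilon_0)}{\epsilon-\epsilon_0}-\partial_2g(t_k,\epsilon_0).
\]
Combining the three convergent series above term by term (which is legitimate; cf.\ the linearity in Lemma~\ref{forwad integral props}) gives
\[
\frac{G(\epsilon)-G(\epsilon_0)}{\epsilon-\epsilon_0}-\int_{0}^{s}\partial_2g(t,\epsilon_0)\ddd t=-\sum_{k=0}^{\infty}c_k\,R_k(\epsilon).
\]
Now every $t_k$ belongs to $[s]_{\h}$, so the hypothesis that $g(t,\cdot)$ is differentiable at $\epsilon_0$ uniformly in $t$ furnishes, for each $\varepsilon>0$, a $\delta>0$ such that $0<|\epsilon-\epsilon_0|<\delta$ forces $|R_k(\epsilon)|<\varepsilon$ simultaneously for all $k\in\mathbb{N}_0$.

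Putting the two facts together, for $0<|\epsilon-\epsilon_0|<\delta$ one obtains
\[
\left|\frac{G(\epsilon)-G(\epsilon_0)}{\epsilon-\epsilon_0}-\int_{0}^{s}\partial_2g(t,\epsilon_0)\ddd t\right|\le\sum_{k=0}^{\infty}|c_k|\,|R_k(\epsilon)|\le M\varepsilon .
\]
Since $M$ is a fixed constant and $\varepsilon>0$ is arbitrary, letting $\epsilon\To\epsilon_0$ shows that $G$ is differentiable at $\epsilon_0$ with $G'(\epsilon_0)=\int_{0}^{s}\partial_2g(t,\epsilon_0)\ddd t$, as claimed; the degenerate case $s=0$, where $[s]_{\h}=\{0\}$ and both sides vanish, may be disposed of at the outset. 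The only genuine obstacle is the interchange of the limit $\epsilon\To\epsilon_0$ with the infinite summation, and this is exactly what the ``uniformly in $t$'' hypothesis — combined with the finiteness of $M$, which is where the assumption $s\in I$ enters — is designed to handle; the remainder is routine bookkeeping with the series \eqref{int w to x}.
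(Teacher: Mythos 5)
Your proof is correct and follows essentially the same route as the paper's: both hinge on the uniform differentiability hypothesis to bound the difference quotient minus $\partial_2g$ uniformly over $[s]_{\h}$ and then push that bound through the $\h$-integral. The only difference is that the paper obtains the final estimate by citing linearity (Lemma~\ref{forwad integral props}) and the comparison inequality of Lemma~\ref{ineq_lem1} together with $\int_{0}^{s}\frac{\varepsilon}{s}\ddd t=\varepsilon$, whereas you re-derive the same estimate by hand from the defining series, checking $\sum_{k}|c_k|=|s|$ directly.
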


\begin{proof}
Without loss of generality we can assume that $s>0$.
Let $\varepsilon>0$ be arbitrary. Since $g(t,\cdot)$ is
differentiable at $\epsilon_{0}$, uniformly in $t$, there exists
$\delta>0$ such that, for all $t\in[s]_{\h}$ and for
$0<|\epsilon-\epsilon_{0}|<\delta$, the following inequality holds:
$$
\left|\frac{g(t,\epsilon)-g(t,\epsilon_{0})}{\epsilon-\epsilon_{0}}
-\partial_2g(t,\epsilon_{0})\right|<\frac{\varepsilon}{s}.
$$
Applying
Lemma~\ref{forwad integral props} and Lemma~\ref{ineq_lem1}, for
$0<|\epsilon-\epsilon_{0}|<\delta$, we have
\begin{equation*}
\begin{split}
&\left|\frac{G(\epsilon)-G(\epsilon_{0})}{\epsilon-\epsilon_{0}}
-G'(\epsilon_{0})\right|\\
&\quad =\left|\frac{\int_{0}^sg(t,\epsilon)\ddd t-\int_{0}^s
g(t,\epsilon_{0})\ddd t}{\epsilon-\epsilon_{0}}
-\int_{0}^s\partial_2g(t,\epsilon_{0})\ddd t\right|\\
&\quad =\left|\int_{0}^s\left[\frac{g(t,\epsilon)
-g(t,\epsilon_{0})}{\epsilon-\epsilon_{0}}
-\partial_2g(t,\epsilon_{0})\right]\ddd t\right|\\
&\quad <\int_{0}^s\frac{\varepsilon}{s}\ddd t
=\frac{\varepsilon}{s}\int_{0}^s1\ddd t =\varepsilon.
\end{split}
\end{equation*}
Hence, $G(\cdot)$ is differentiable at $\epsilon_0$ and
$G'(\epsilon_0)=\int_{0}^s\partial_2g(t,\epsilon_0)\ddd t$.
\end{proof}


\subsection{The power quantum Euler--Lagrange equation}
\label{ssec:EL}

We consider the variational problem of finding minimizers
(or maximizers) of a functional
\begin{equation}
\label{vp}
\mathcal{L}[y]
=\int_{a}^{b}f(t,y(\fft),\dd y(t))\ddd t ,
\end{equation}
over all $y\in\mathbb{E}([a,b]_{\h},\mathbb{R})$ satisfying the
boundary conditions
\begin{equation}
\label{bc}
y(a)=\alpha,\,
\quad y(b)=\beta, \quad \alpha,
\beta \in \mathbb{R},
\end{equation}
where $f:[a,b]_{\h}\times\mathbb{R}\times\mathbb{R}\rightarrow
\mathbb{R}$ is a given function. A function $y\in
\mathbb{E}([a,b]_{\h},\mathbb{R})$ is said to be admissible if it
satisfies endpoint conditions \eqref{bc}. Let us denote by
$\partial_2f$ and $\partial_3f$, respectively, the partial
derivatives of $f(\cdot,\cdot,\cdot)$ with respect to its second and
third argument. In the sequel, we assume that $(u,v)\rightarrow
f(t,u,v)$ is a $C^1(\mathbb{R}^{2}, \mathbb{R})$ function for any $t
\in [a,b]_{\h}$ and $f(\cdot,y(\cdot),D_{\h}y(\cdot))$,
$\partial_2f(\cdot,y(\cdot),D_{\h}y(\cdot))$, and
$\partial_3f(\cdot,y(\cdot),D_{\h}y(\cdot))$ are continuous and
bounded on $[a,b]_{\h}$ for all admissible functions $y(\cdot)$.
We say that $p\in \mathbb{E}([a,b]_{\h},\mathbb{R})$
is an admissible variation for \eqref{vp}--\eqref{bc}
if $p(a)=p(b)=0$.

For an admissible variation $p$, we define function $\phi : \,
(-\bar{\varepsilon},\bar{\varepsilon})\rightarrow \mathbb{R}$ by
$$
\phi(\varepsilon) = \phi(\varepsilon;y,p) :=\mathcal{L}[y +
\varepsilon p].
$$
The first variation of problem \eqref{vp}--\eqref{bc}
is defined by
$$
\delta\mathcal{L}[y,p]:=\phi(0;y,p)=\phi'(0).
$$
Observe that,
\begin{equation*}
\begin{split}
\mathcal{L}[y + \varepsilon p]
&= \int_a^b f(t,y(qt^n)+\varepsilon
p(qt^n),\dd y(t)
+ \varepsilon \dd p(t)) \ddd  t\\
&=\int_{0}^b f(t,y(qt^n)+\varepsilon p(qt^n),\dd y(t)
+ \varepsilon \dd p(t)) \ddd  t\\
&\qquad -\int_{0}^a f(t,y(qt^n)+\varepsilon p(qt^n),\dd y(t)
+ \varepsilon \dd p(t)) \ddd t.
\end{split}
\end{equation*}
Writing
\begin{equation*}
\mathcal{L}_b[y + \varepsilon p]=\int_{0}^b f(t,y(qt^n)+\varepsilon
p(qt^n),\dd y(t)+ \varepsilon \dd p(t)) \ddd  t
\end{equation*}
and
\begin{equation*}
\mathcal{L}_a[y + \varepsilon p]=\int_{0}^a f(t,y(qt^n)+\varepsilon
p(qt^n),\dd y(t)+ \varepsilon \dd p(t)) \ddd  t,
\end{equation*}
we have $$\mathcal{L}[y + \varepsilon p]=\mathcal{L}_b[y +
\varepsilon p]-\mathcal{L}_a[y + \varepsilon p].$$ Therefore,
\begin{equation}
\label{var}
\delta\mathcal{L}[y,p]
=\delta\mathcal{L}_b[y,p]-\delta\mathcal{L}_a[y, p].
\end{equation}
Knowing \eqref{var}, the following lemma is a direct
consequence of Lemma~\ref{fun}.

\begin{lem}
\label{asump} Put $g(t,\varepsilon)=f\left(t,y(qt^n)+\varepsilon
p(qt^n),\dd y(t)+ \varepsilon \dd p(t)\right)$ for $\varepsilon \in
(-\bar{\varepsilon},\bar{\varepsilon})$. Assume that:
\begin{itemize}
\item[(i)] $g(t,\cdot)$ is differentiable at $0$, uniformly in
$t \in [a]_{\h}$, and $g(t,\cdot)$ is differentiable at $0$,
uniformly in $t \in [b]_{\h}$;
\item[(ii)] $\mathcal{L}_a[y + \varepsilon p]$ and $\mathcal{L}_b[y
+ \varepsilon p]$, for $\varepsilon$ near $0$, exist;
\item[(iii)] $\int_{0}^a\partial_2g(t,0)\ddd t$ and
$\int_{0}^b\partial_2g(t,0)\ddd t$ exist.
\end{itemize}
Then,
\begin{multline*}
\delta\mathcal{L}[y,h]=\int_a^b \Biggl[
\partial_2 f(t,y(qt^n),\dd y(t)) p(qt^n) \\
+ \partial_3 f(t,y(qt^n),\dd y(t)) \dd p(t)\Biggr]\ddd t.
\end{multline*}
\end{lem}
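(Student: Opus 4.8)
The plan is to derive the stated formula for $\delta\mathcal{L}[y,p]$ by applying Lemma~\ref{fun} separately to the two pieces $\mathcal{L}_a[y+\varepsilon p]$ and $\mathcal{L}_b[y+\varepsilon p]$, and then combining via the decomposition \eqref{var}. Recall that $\mathcal{L}_b[y+\varepsilon p]=\int_0^b g(t,\varepsilon)\ddd t = G_b(\varepsilon)$, where $g(t,\varepsilon)=f(t,y(qt^n)+\varepsilon p(qt^n),\dd y(t)+\varepsilon\dd p(t))$, and similarly $\mathcal{L}_a[y+\varepsilon p]=G_a(\varepsilon)=\int_0^a g(t,\varepsilon)\ddd t$. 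So the target is just $G_b'(0)-G_a'(0)$.

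First I would verify the hypotheses of Lemma~\ref{fun} are met for each of $G_a$ and $G_b$ at $\epsilon_0=0$. Hypothesis (i) of Lemma~\ref{asump} gives that $g(t,\cdot)$ is differentiable at $0$ uniformly in $t\in[a]_{\h}$ and uniformly in $t\in[b]_{\h}$; hypothesis (ii) gives existence of $\mathcal{L}_a[y+\varepsilon p]$ and $\mathcal{L}_b[y+\varepsilon p]$ for $\varepsilon$ near $0$; and hypothesis (iii) gives existence of $\int_0^a\partial_2 g(t,0)\ddd t$ and $\int_0^b\partial_2 g(t,0)\ddd t$. These are precisely, piece by piece, the hypotheses needed to invoke Lemma~\ref{fun} with $s=a$ and with $s=b$. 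Therefore $G_a$ and $G_b$ are differentiable at $0$ with
\[
G_a'(0)=\int_0^a \partial_2 g(t,0)\ddd t,\qquad
G_b'(0)=\int_0^b \partial_2 g(t,0)\ddd t.
\]

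Next I would compute $\partial_2 g(t,0)$ explicitly by the classical chain rule in the variable $\varepsilon$: since $(u,v)\mapsto f(t,u,v)$ is $C^1$ and the arguments $u=y(qt^n)+\varepsilon p(qt^n)$, $v=\dd y(t)+\varepsilon\dd p(t)$ are affine in $\varepsilon$, we get
\[
\partial_2 g(t,0)=\partial_2 f(t,y(qt^n),\dd y(t))\,p(qt^n)
+\partial_3 f(t,y(qt^n),\dd y(t))\,\dd p(t).
\]
Substituting this into $G_b'(0)-G_a'(0)$ and using the definition \eqref{int a to b} of $\int_a^b$ as $\int_0^b-\int_0^a$, together with linearity of the $\h$-integral (Lemma~\ref{forwad integral props}(v)), yields
\[
\delta\mathcal{L}[y,p]
=\int_a^b\Bigl[\partial_2 f(t,y(qt^n),\dd y(t))\,p(qt^n)
+\partial_3 f(t,y(qt^n),\dd y(t))\,\dd p(t)\Bigr]\ddd t,
\]
which is the claimed identity (the statement writes $h$ for the variation $p$; this is the same admissible variation).

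**The main obstacle** is purely bookkeeping rather than substantive: one must be careful that the definite integral over $[a,b]$ is not itself a convergent-series object but the difference of two series-integrals from $0$, so Lemma~\ref{fun} genuinely has to be applied twice, once on $[0,a]$ and once on $[0,b]$, and only afterwards are the results subtracted. Checking that hypothesis (iii) of Lemma~\ref{asump} indeed delivers integrability of $\partial_2 g(t,0)$ in the sense required by Lemma~\ref{fun} — and that the continuity/boundedness assumptions on $f$, $\partial_2 f$, $\partial_3 f$ along admissible curves guarantee the interchange of differentiation and summation implicit in Lemma~\ref{fun} — is the only place where one must be attentive; everything else is the classical chain rule plus linearity of the integral.
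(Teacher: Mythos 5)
Your proposal is correct and follows exactly the route the paper intends: the paper dispenses with an explicit proof by noting that, given the decomposition \eqref{var} of $\delta\mathcal{L}[y,p]$ as $\delta\mathcal{L}_b[y,p]-\delta\mathcal{L}_a[y,p]$, the lemma is a direct consequence of Lemma~\ref{fun} applied with $s=a$ and $s=b$. Your added details (matching hypotheses (i)--(iii) to those of Lemma~\ref{fun}, computing $\partial_2 g(t,0)$ by the classical chain rule, and recombining via \eqref{int a to b}) are precisely the steps the paper leaves implicit.
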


In the sequel, we always assume, without mentioning it explicitly,
that variational problems satisfy the assumptions of
Lemma~\ref{asump}.

\begin{defn}
An admissible function $\tilde{y}$ is said to be a local minimizer
(resp. a local maximizer) to problem \eqref{vp}--\eqref{bc} if there
exists $\delta > 0$ such that $\mathcal{L}[\tilde{y}]\leq \mathcal{L}[y]$
(resp. $\mathcal{L}[\tilde{y}]\geq \mathcal{L}[y]$) for all
admissible $y$ with $\|y-\tilde{y}\|_{1}<\delta$.
\end{defn}

The following result offers a necessary condition for local
extremizer.

\begin{thm}[A necessary optimality condition for problem \eqref{vp}--\eqref{bc}]
\label{nec:con} Suppose that the optimal path to problem
\eqref{vp}--\eqref{bc} exists and is given by $\tilde{y}$. Then,
$\delta\mathcal{L}[\tilde{y},p]=0$.
\end{thm}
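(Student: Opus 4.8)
The plan is to run the classical first–variation argument: reduce the vanishing of $\delta\mathcal{L}[\tilde{y},p]$ to the elementary fact that a differentiable real function attaining a local extremum at an interior point has zero derivative there. First I would fix an arbitrary admissible variation $p\in\mathbb{E}([a,b]_{\h},\mathbb{R})$ with $p(a)=p(b)=0$, and consider the one–parameter family $\tilde{y}+\varepsilon p$ for $\varepsilon\in(-\bar\varepsilon,\bar\varepsilon)$. Each such function is admissible, since $(\tilde{y}+\varepsilon p)(a)=\alpha$ and $(\tilde{y}+\varepsilon p)(b)=\beta$, and $\|\varepsilon p\|_{1}=|\varepsilon|\,\|p\|_{1}$, so for $|\varepsilon|$ sufficiently small we have $\|(\tilde{y}+\varepsilon p)-\tilde{y}\|_{1}<\delta$, with $\delta$ the radius given by the definition of local minimizer (the argument for a local maximizer is identical with the inequality reversed).

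Next I would introduce $\phi(\varepsilon):=\mathcal{L}[\tilde{y}+\varepsilon p]=\phi(\varepsilon;\tilde{y},p)$, which is exactly the function appearing in the definition of the first variation, so $\delta\mathcal{L}[\tilde{y},p]=\phi'(0)$. By the local optimality of $\tilde{y}$, for all $\varepsilon$ in a neighbourhood of $0$ one has $\phi(\varepsilon)=\mathcal{L}[\tilde{y}+\varepsilon p]\ge\mathcal{L}[\tilde{y}]=\phi(0)$; that is, $\phi$ has a local minimum at the interior point $\varepsilon=0$. Under the standing assumptions (those of Lemma~\ref{asump}), $\phi$ is differentiable at $0$, with
\begin{multline*}
\phi'(0)=\delta\mathcal{L}[\tilde{y},p]
=\int_a^b\Bigl[\partial_2 f\bigl(t,\tilde{y}(qt^n),\dd\tilde{y}(t)\bigr)\,p(qt^n)\\
+\,\partial_3 f\bigl(t,\tilde{y}(qt^n),\dd\tilde{y}(t)\bigr)\,\dd p(t)\Bigr]\ddd t,
\end{multline*}
by applying Lemma~\ref{asump} (which in turn rests on Lemma~\ref{fun}) to the two pieces $\mathcal{L}_a$ and $\mathcal{L}_b$ and using \eqref{var}. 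Since $\phi$ is a real–valued function of one real variable, differentiable at $0$ and with a local extremum there, Fermat's theorem gives $\phi'(0)=0$. Hence $\delta\mathcal{L}[\tilde{y},p]=0$, and as $p$ was an arbitrary admissible variation, the conclusion follows.

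There is no substantive obstacle here: the only point requiring care is the differentiability of $\phi$ at $0$, together with the interchange of $\h$-integration and differentiation with respect to $\varepsilon$, but both are already guaranteed by Lemma~\ref{asump} and Lemma~\ref{fun} under the hypotheses we are assuming throughout this subsection. The admissibility and norm-closeness of $\tilde{y}+\varepsilon p$ for small $\varepsilon$ are immediate from linearity of the boundary conditions and of the norm. Thus the proof is short, amounting to packaging these ingredients into the standard Fermat-type argument.
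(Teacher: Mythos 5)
Your argument is correct and is exactly the standard Fermat-type first-variation proof that the paper itself invokes by deferring to \cite{MalinowskaTorres}: admissibility and norm-closeness of $\tilde{y}+\varepsilon p$ for small $\varepsilon$, differentiability of $\phi$ at $0$ via Lemma~\ref{asump}, and vanishing of the derivative at an interior local extremum. Nothing is missing, and no genuinely different route is taken.
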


\begin{proof}
The proof is similar to the one found in \cite{MalinowskaTorres}.
\end{proof}

\begin{thm}[Euler--Lagrange equation for problem \eqref{vp}--\eqref{bc}]
\label{thm:mr}
Suppose that $\tilde{y}$ is an optimal path
to problem \eqref{vp}--\eqref{bc}. Then,
\begin{equation}
\label{Euler}
\dd\partial_3f(t,\tilde{y}(\fft),\dd\tilde{y}(t))
=\partial_2f(t,\tilde{y}(\fft),\dd\tilde{y}(t))
\end{equation}
for all $t \in [a,b]_{\h}$.
\end{thm}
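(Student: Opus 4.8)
The plan is to combine the necessary condition $\delta\mathcal{L}[\tilde y,p]=0$ from Theorem~\ref{nec:con} with the explicit formula for the first variation from Lemma~\ref{asump}, then use integration by parts to move the $\dd p$ term onto $p$, and finally invoke the Fundamental Lemma (Lemma~\ref{lemma:DR}) to strip off the arbitrary variation. First I would fix an arbitrary admissible variation $p\in\mathbb{E}([a,b]_{\h},\mathbb{R})$ with $p(a)=p(b)=0$ and write, using Lemma~\ref{asump},
\begin{equation*}
0=\delta\mathcal{L}[\tilde y,p]=\int_a^b\Bigl[\partial_2 f(t,\tilde y(\fft),\dd\tilde y(t))\,p(\fft)+\partial_3 f(t,\tilde y(\fft),\dd\tilde y(t))\,\dd p(t)\Bigr]\ddd t.
\end{equation*}

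Next I would apply the integration by parts formula \eqref{integral by part} to the second term, taking $g=p$ and $f=\partial_3 f(\cdot,\tilde y(\cdot),\dd\tilde y(\cdot))$ there, which gives
\begin{equation*}
\int_a^b\partial_3 f(t,\tilde y(\fft),\dd\tilde y(t))\,\dd p(t)\ddd t
=\Bigl.\partial_3 f(t,\tilde y(\fft),\dd\tilde y(t))\,p(t)\Bigr|_a^b
-\int_a^b\dd\partial_3 f(t,\tilde y(\fft),\dd\tilde y(t))\,p(\fft)\ddd t.
\end{equation*}
The boundary term vanishes because $p(a)=p(b)=0$. Substituting back, the stationarity condition becomes
\begin{equation*}
\int_a^b\Bigl[\partial_2 f(t,\tilde y(\fft),\dd\tilde y(t))-\dd\partial_3 f(t,\tilde y(\fft),\dd\tilde y(t))\Bigr]p(\fft)\ddd t=0
\end{equation*}
for every admissible variation $p$. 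Finally I would set $\varphi(t):=\partial_2 f(t,\tilde y(\fft),\dd\tilde y(t))-\dd\partial_3 f(t,\tilde y(\fft),\dd\tilde y(t))$, observe that $\varphi\in\mathcal{D}([a,b]_{\h},\mathbb{R})$ by the standing continuity and boundedness hypotheses on $f$ and its derivatives together with properties of $\dd$, and apply Lemma~\ref{lemma:DR} to conclude $\varphi(t)=0$ for all $t\in[a,b]_{\h}$, which is exactly \eqref{Euler}.

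The routine parts are the substitution and bookkeeping; the one place requiring care is the legitimacy of the integration by parts step — namely checking that $\partial_3 f(\cdot,\tilde y(\cdot),\dd\tilde y(t))$ is itself $\h$-differentiable so that \eqref{integral by part} applies, and that the resulting integrand $\varphi$ indeed lies in $\mathcal{D}([a,b]_{\h},\mathbb{R})$ so that the Fundamental Lemma is applicable. I expect this regularity verification, rather than any computation, to be the main obstacle, and it is handled by the blanket smoothness assumptions imposed before the statement of Theorem~\ref{thm:mr}.
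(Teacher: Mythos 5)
Your proposal is correct and follows essentially the same route as the paper's own proof: combine the necessary condition $\delta\mathcal{L}[\tilde{y},p]=0$ from Theorem~\ref{nec:con} with the first-variation formula of Lemma~\ref{asump}, apply the integration by parts formula \eqref{integral by part} so that the boundary term drops by $p(a)=p(b)=0$, and conclude via the Fundamental Lemma~\ref{lemma:DR}. Your closing remark on verifying that $\partial_3 f(\cdot,\tilde{y}(\cdot),\dd\tilde{y}(\cdot))$ is $\h$-differentiable and that the resulting integrand lies in $\mathcal{D}([a,b]_{\h},\mathbb{R})$ is a point the paper passes over silently under its standing assumptions, so no gap.
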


\begin{proof}
Suppose that $\mathcal{L}$ has a local extremizer $\tilde{y}$.
Consider the value of $\mathcal{L}$ at a nearby function $y =
\tilde{y} + \varepsilon p$, where $\varepsilon\in \mathbb{R}$ is a
small parameter, $p\in \mathbb{E}$, and $p(a) =p(b)= 0$. Let
$$
\phi(\varepsilon) = \mathcal{L}[\tilde{y} + \varepsilon p] =
\int_a^b f(t,\tilde{y}(\fft)+\varepsilon p(\fft),\dd\tilde{y}(t)+
\varepsilon \dd p(t)) \ddd t.
$$
By Theorem~\ref{nec:con}, a necessary condition for $\tilde{y}$ to be
an extremizer is given by
\begin{equation}
\label{eq:FT} \left.\phi'(\varepsilon)\right|_{\varepsilon=0} = 0
\Leftrightarrow \int_a^b \left[ \partial_2f(\cdots) p(\fft) +
\partial_3f(\cdots) \dd p(t)\right]\ddd t = 0 \, ,
\end{equation}
where $(\cdots) = \left(t,\tilde{y}(\fft),\dd \tilde{y}(t)\right)$.
Integration by parts (see \eqref{integral by part}) gives
\begin{equation*}
\int_a^b \partial_3f(\cdots) \dd p(t)\ddd t
=\left.\partial_3f(\cdots)p(t)\right|_{t=a}^{t=b}-\int_a^b
 \dd \partial_3f(\cdots)p(\fft)\ddd t.
\end{equation*}
Because $p(a) = p(b)= 0$, the necessary condition \eqref{eq:FT} can
be written as
\begin{equation*}
0 = \int_a^b \left(\partial_2f(\cdots)- \dd
\partial_3f(\cdots)\right) p(\fft)\ddd t
\end{equation*}
for all $p$ such that $p(a) = p(b) = 0$. Thus, by
Lemma~\ref{lemma:DR}, we have
\begin{equation*}
\partial_2f(\cdots)-\dd \partial_3f(\cdots)=0
\end{equation*}
for all $t\in[a,b]_{\h}$.
\end{proof}

\begin{rem}
Analogously to the classical calculus of variations \cite{Weinstock},
to the solutions of the Euler--Lagrange equation \eqref{Euler}
we call \emph{(power quantum) extremals}.
\end{rem}

\begin{rem}
If the function under the sign of integration $f$ (the Lagrangian)
is given by $f = f(t,y_1,\ldots, y_m, \dd y_1,\ldots,\dd y_m)$, then
the necessary optimality condition is given by $m$ equations similar
to \eqref{Euler}, one equation for each variable.
\end{rem}

\begin{exm}
\label{example1} Let us fix $n$, $q$, such that $1\in I$. Consider
the problem
\begin{equation}\label{AD}
\text{minimize} \quad
\mathcal{L}[y]=\int_{0}^{1}\left(y(\fft)
+\frac{1}{2}(\dd y(t))^2\right)\ddd t
\end{equation}
subject to the boundary conditions
\begin{equation}
\label{eq:bc}
    y(0)=0,  \quad y(1)=\beta,
\end{equation}
where $\beta\in \R$. If $y$ is a local minimizer to problem
\eqref{AD}--\eqref{eq:bc}, then by Theorem~\ref{thm:mr} it satisfies
the Euler--Lagrange equation
\begin{equation*}
\dd \dd y(t)=1
\end{equation*}
for all $t\in\left\{q^{[k]_n}: k\in\mathbb{N}_{0}\right\}\cup\{0\}$.
Applying Theorem~\ref{forward sum-con-multi-divi} (see also
Example~\ref{derivative}) and Theorem~\ref{Anti derivative},
we obtain
\begin{equation*}
y(t)=-\sum_{k=0}^{\infty}\ffkt\left(q^{n^k} t^{n^k (n-1)}-1\right)
\left(q^{[k]_n}t^{n^k}+c\right),
\end{equation*}
where $c$ satisfies equation
\begin{equation*}
\beta=-\sum_{k=0}^{\infty}\left(q^{[k+1]_n}-q^{[k]_n}\right)
\left(q^{[k]_n}+c\right) ,
\end{equation*}
as the power quantum extremal to problem \eqref{AD}--\eqref{eq:bc}.
For example, choosing $n=1$ and $\beta=\frac{1}{1+q}$
in \eqref{AD}--\eqref{eq:bc}, we get the extremal
\begin{equation*}
y(t)=\frac{t^2}{1+q} \, .
\end{equation*}
\end{exm}


\subsection{Leitmann's direct optimization method}
\label{Leitmann}

Let $\bar{f}:[a,b]_{\h}\times \mathbb{R} \times \mathbb{R}
\rightarrow \mathbb{R}$. We assume that $(u,v)\rightarrow
\bar{f}(t,u,v)$ is a $C^1(\mathbb{R}^{2}, \mathbb{R})$ function for
any $t \in [a,b]_{\h}$ and $\bar{f}(\cdot,y(\cdot),D_{\h}y(\cdot))$,
$\partial_2\bar{f}(\cdot,y(\cdot),\dd y(\cdot))$, and
$\partial_3\bar{f}(\cdot,y(\cdot),\dd y(\cdot))$ are continuous and
bounded on $[a,b]_{\h}$ for all admissible functions $y(\cdot)$.
Consider the integral
\begin{equation*}
\bar{\mathcal{L}}[\bar{y}]=\int_a^b\bar{f}(t,\bar{y}(\fft),\dd
\bar{y}(t))\ddd t.
\end{equation*}

\begin{lem}[Leitmann's power quantum fundamental lemma]
\label{Fund:lemma:Leit}
Let $y=z(t,\bar{y})$ be a transformation
having an unique inverse $\bar{y}=\bar{z}(t,y)$ for all $t\in
[a,b]_{\h}$ such that there is a one-to-one correspondence
\begin{equation*}
y(t)\Leftrightarrow \bar{y}(t),
\end{equation*}
for all functions $y\in \mathbb{E}$ satisfying \eqref{bc} and all
functions $\bar{y}\in \mathbb{E}([a,b]_{\h},\mathbb{R})$ satisfying
\begin{equation}
\label{bc:trans}
\bar{y}=\bar{z}(a,\alpha), \quad \bar{y}=\bar{z}(b,\beta).
\end{equation}
If the transformation $y=z(t,\bar{y})$ is such that there exists a
function $G:[a,b]_{\h} \times \mathbb{R} \rightarrow \mathbb{R}$
satisfying the functional identity
\begin{equation}
\label{id}
f(t,y(\fft),\dd y(t))-\bar{f}(t,\bar{y}(\fft),\dd \bar{y}(t)) =\dd
G(t,\bar{y}(t)) \, ,
\end{equation}
then if  $\bar{y}^{*}$ yields the extremum of $\bar{\mathcal{L}}$
with $\bar{y}^{*}$ satisfying \eqref{bc:trans},
$y^{*}=z(t,\bar{y}^{*})$ yields the extremum of $\mathcal{L}$ for
$y^{*}$ satisfying \eqref{bc}.
\end{lem}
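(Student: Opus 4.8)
The plan is to follow the classical Leitmann-type argument, adapted to the $\h$-quantum calculus using the tools already in hand, namely Theorem~\ref{main_calc} (the fundamental theorem relating $\dd$ and $d_{\h}$) and the hypothesized one-to-one correspondence between admissible $y$ and admissible $\bar y$. First I would fix any admissible $y \in \mathbb{E}([a,b]_{\h},\mathbb{R})$ satisfying \eqref{bc}, let $\bar y = \bar z(t,y)$ be its image under the inverse transformation, and observe that by hypothesis $\bar y$ is admissible for $\bar{\mathcal{L}}$, i.e. it satisfies \eqref{bc:trans}. The goal is to compare $\mathcal{L}[y]$ with $\bar{\mathcal{L}}[\bar y]$.

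The key step is to integrate the functional identity \eqref{id} over $[a,b]$ with respect to $d_{\h}t$. Using linearity of the $\h$-integral (Lemma~\ref{forwad integral props}(v)) and Theorem~\ref{main_calc} applied to the function $t \mapsto G(t,\bar y(t))$, which is continuous at $0$ under our standing assumptions, I get
\begin{equation*}
\mathcal{L}[y]-\bar{\mathcal{L}}[\bar y]
=\int_a^b \dd G(t,\bar y(t))\,\ddd t
= G(b,\bar y(b)) - G(a,\bar y(a)).
\end{equation*}
Now the crucial observation is that the right-hand side depends only on the endpoint data: since $\bar y$ satisfies \eqref{bc:trans}, we have $\bar y(a)=\bar z(a,\alpha)$ and $\bar y(b)=\bar z(b,\beta)$, so $G(b,\bar y(b))-G(a,\bar y(a))$ is a constant $c_0$ that is the same for every admissible pair $(y,\bar y)$. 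Hence $\mathcal{L}[y] = \bar{\mathcal{L}}[\bar y] + c_0$ for all admissible $y$.

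From this affine relationship the conclusion is immediate: if $\bar y^{*}$ yields the extremum (say minimum) of $\bar{\mathcal{L}}$ over functions satisfying \eqref{bc:trans}, then for $y^{*}=z(t,\bar y^{*})$, which satisfies \eqref{bc} by the one-to-one correspondence, and for any competing admissible $y$ with image $\bar y$, we have $\mathcal{L}[y] = \bar{\mathcal{L}}[\bar y] + c_0 \ge \bar{\mathcal{L}}[\bar y^{*}] + c_0 = \mathcal{L}[y^{*}]$; the maximum case is identical with the inequality reversed. I expect the only genuinely delicate point to be verifying that $t\mapsto G(t,\bar y(t))$ falls within the scope of Theorem~\ref{main_calc} — that is, that it is $\h$-differentiable on $I$ with the stated antiderivative structure and continuous at $0$ — which should follow from the smoothness/continuity hypotheses built into the setup of Section~\ref{Leitmann} together with the $\h$-differentiability of $\bar y$; everything else is a short formal manipulation. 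I would also remark that, exactly as in Leitmann's original method, the identity \eqref{id} transfers not just global extrema but the extremal character itself, so no appeal to the Euler--Lagrange equation is needed.
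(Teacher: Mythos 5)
Your proposal is correct and follows essentially the same route as the paper's own proof: integrate the identity \eqref{id} over $[a,b]$, apply Theorem~\ref{main_calc} to $t\mapsto G(t,\bar{y}(t))$ to get $\mathcal{L}[y]-\bar{\mathcal{L}}[\bar{y}]=G(b,\bar{z}(b,\beta))-G(a,\bar{z}(a,\alpha))$, and conclude from the fact that this difference is a constant determined by the endpoint data. Your explicit inequality chain at the end and the remark about verifying that $G(t,\bar{y}(t))$ falls under the scope of Theorem~\ref{main_calc} are slightly more detailed than the paper's treatment, but the argument is the same.
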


\begin{proof}
The proof is similar in spirit to Leitmann's proof
\cite{Leitmann67,Leitmann01,MR2035262}. Let  $y\in
\mathbb{E}([a,b]_{\h},\mathbb{R})$ satisfies \eqref{bc} and define
functions $\bar{y}\in \mathbb{E}([a,b]_{\h},\mathbb{R})$ through the
formula $\bar{y}=\bar{z}(t,y)$, $t\in[a,b]_{\h}$. Then $\bar{y}\in
\mathbb{E}([a,b]_{\h},\mathbb{R})$ and satisfies \eqref{bc:trans}.
Moreover, as a result of \eqref{id}, it follows that
\begin{equation*}
\begin{split}
\mathcal{L}[y]-\bar{\mathcal{L}}[\bar{y}] &=
\int_{a}^{b}f(t,y(\fft),\dd y(t))\ddd t
-\int_{a}^{b}\bar{f}(t,\bar{y}(\fft),\dd \bar{y}(t))\ddd t\\
&=\int_{a}^{b}\dd G(t,\bar{y}(t))\ddd t
=G(b,\bar{y}(b))-G(a,\bar{y}(a))\\
&=G(b,\bar{z}(b,\beta))-G(a,\bar{z}(a,\alpha)),
\end{split}
\end{equation*}
from which the desired conclusion follows immediately: the
right-hand side of the above equality is a constant depending only
on the fixed-endpoint conditions \eqref{bc}.
\end{proof}

\begin{exm}
\label{example4} Let $a,b\in I$ with $a<b$,  and $\alpha$, $\beta$
be two given reals, $\alpha \ne \beta$. We consider the following
problem:
\begin{equation}
\label{illust:Ex:4}
\begin{gathered}
\text{minimize} \quad \mathcal{L}[y] =\int_{a}^{b}
\left[\dd (y(t)g(t))\right]^2 \ddd t \, , \\
y(a)=\alpha \, , \quad y(b)=\beta \, ,
\end{gathered}
\end{equation}
where $g$ does not vanish on the interval $[a,b]_{\h}$. Observe that
$\bar{y}(t) = g^{-1}(t)$ minimizes $\mathcal{L}$ with end conditions
$\bar{y}(a) = g^{-1}(a)$ and $\bar{y}(b) = g^{-1}(b)$. Let
$y(t)=\bar{y}(t)+p(t)$. Then
\begin{multline}
\label{tran}
\left[\dd (y(t)g(t))\right]^2 =\left[\dd
(\bar{y}(t)g(t))\right]^2\\
+\dd (p(t)g(t))\dd
\left(2\bar{y}(t)g(t)+p(t)g(t)\right).
\end{multline}
Consequently, if $p(t) = (At + B)g^{-1}(t)$, where $A$ and $B$ are
constants, then \eqref{tran} is of the form \eqref{id}, since $\dd
(p(t)g(t))$ is constant. Thus, the function
\begin{equation*}
y(t)=(At+C)g^{-1}(t)
\end{equation*}
with
\begin{equation*}
A=\left[\alpha g(a)-\beta g(b)\right](a-b)^{-1}, \quad
C=\left[a\beta g(b)-b\alpha g(a)\right](a-b)^{-1},
\end{equation*}
minimizes \eqref{illust:Ex:4}.
\end{exm}


\section*{Acknowledgements}

The second and third authors were partially supported by the
\emph{Systems and Control Group} of the R\&D Unit CIDMA through the
Portuguese Foundation for Science and Technology (FCT). Malinowska
was also supported by BUT Grant S/WI/2/11; Torres by research
project UTAustin/MAT/0057/2008. We are grateful to an anonymous referee
for several comments and suggestions.


\footnotesize


\end{document}